\newcommand{\re}{\text{\rm Re\,}}
\newcommand{\im}{\text{\rm Im\,}}
\newcommand{\bx}{\mathbf{X}}
\newcommand{\bd}{{\mathbb{D}}}
\newcommand{\br}{{\mathbb{R}}}  
\newcommand{\bz}{{\mathbb{Z}}}
\newcommand{\bc}{{\mathbb{C}}}
\newcommand{\bt}{{\mathbb{T}}}
\newcommand{\ca}{{\mathcal{A}}}
\newcommand{\cb}{{\mathcal{B}}}
\newcommand{\cc}{{\mathcal{C}}}
\newcommand{\cf}{{\mathcal{F}}}
\newcommand{\cl}{{\mathcal{L}}}
\newcommand{\cm}{{\mathcal{M}}}
\newcommand{\cw}{{\mathcal{W}}}
\newcommand{\cx}{{\mathcal{X}}}
\newcommand{\cz}{{\mathcal{Z}}}
\newcommand{\fc}{{\mathfrak{C}}}
\newcommand{\fw}{{\mathfrak{W}}}
\newcommand{\fu}{{\mathfrak{U}}}
\newcommand{\fv}{{\mathfrak{V}}}
\renewcommand{\a}{\alpha}
\renewcommand{\b}{\beta}
\renewcommand{\l}{\lambda}
\newcommand{\s}{\sigma}
\renewcommand{\r}{\rho}
\newcommand{\p}{\varphi}
\renewcommand{\t}{\theta}
\renewcommand{\d}{\delta}
\renewcommand{\o}{\omega}
\newcommand{\oo}{\Omega}
\newcommand{\z}{\zeta}
\newcommand{\pp}{\Phi}
\newcommand{\lds}{{L^2(d\s)}}
\newcommand{\hhh}[1]{\hat{\hat{#1}}}
\newcommand{\ttt}[1]{\tilde{\tilde{#1}}}
\newcommand{\ccc}[1]{\check{\check{#1}}}
\newcommand{\ovl}{\overline}
\newcommand{\ti}{\tilde}
\newcommand{\lt}{\left}
\newcommand{\rt}{\right}
\newcommand{\dsp}{\displaystyle}
\numberwithin{equation}{section}
\newtheorem{theorem}{Theorem}[section]
\newtheorem{proposition}[theorem]{Proposition}
\theoremstyle{definition}
\begin{document}

\title[Schur algorithm, ORFs, Gaussian processes and prediction]
{Multipoint Schur algorithm, II: generalized moment problems, Gaussian processes and prediction} 
\author[L. Baratchart, L. Golinskii, S. Kupin]{L. Baratchart, L. Golinskii, S. Kupin}

\address{2004 route des Lucioles - BP 93
FR-06902 Sophia Antipolis Cedex, France}
\email{Laurent.Baratchart@sophia.inria.fr}

\address{Mathematics Division, Institute for Low Temperature Physics and
Engineering, 47 Lenin ave., Kharkov 61103, Ukraine}
\email{leonid.golinskii@gmail.com}

\address{IMB, Universit\'e Bordeaux 1, 351 cours de la Lib\'eration, 33405 Talence Cedex France}
\email{skupin@math.u-bordeaux1.fr}

\date{June, 21,  2010}

\thanks{This work was partially supported by grants  ANR-07-BLAN-024701 and  ANR-09-BLAN-005801}

\keywords{Generalized moment problem, generalized stationary Gaussian process (varying Gaussian process), ARMA-processes, varying ARMA-process, prediction problems, orthogonal rational functions}
\subjclass{Primary:  30E05, 60G15; Secondary: 30B70}

\begin{abstract} 
We use nowdays classical theory of  generalized moment problems by Krein-Nudelman \cite{krnu} to define a special class of stochastic Gaussian processes. The class contains, of course, stationary Gaussian processes.  We obtain a spectral representation for the processes from this class and we solve the corresponding prediction problem. The orthogonal rational functions on the unit circle lead to a class of  Gaussian processes providing an example for the above construction.
\end{abstract}

\maketitle

\vspace{-0.5cm}
 
\section*{Introduction} \label{s0}

It is a classical fact that a (discrete-time) stationary Gaussian process admits a spectral representation which allows one to transfer  the study of various characteristics connected to the process to the  study of the shift operator on $L^2(d\mu)$, $\mu$ being a positive Borel measure on the unit circle. The measure $\mu$ is called the spectral measure of the process. The considerations pertaining to the geometry of the space $L^2(d\mu)$ provide striking applications of the theory of orthogonal polynomials on the unit circle, see Simon \cite{si1} for exhaustive treatment of the subject and its applications.

The purpose of the present paper is two-fold. Being interested in {\it orthogonal rational functions} on the unit circle (ORFs, for the sake of brevity), the authors wanted to understand what role these systems of functions play with respect to stochastic Gaussian processes. That is, is there a spectral representation of certain stochastic processes via ORFs? What kind of stochastic processes do arise in this way? How does one formulate (and solve) prediction problems for these processes? etc. A brief discussion of some aspects of this problem can be found in Dewilde-Dym \cite{de1}; the authors say that,  for a stationary Gaussian process,  one can apply the spectral representation theorem (see Theorem \ref{t03}). The past/future is given by subspaces looking like
$\cx^-_n=lin\, \{t^k : k\le n\}$ and $\cx^+_n=lin\, \{t^k: k\ge n+1\}$. Roughly speaking, they suggest however to consider "deformed" spaces $\cl_n$ (see Section \ref{ss01}), and, following the parallels with classical prediction problems, to study the projections to $\cl_n$ instead of $\cx^\pm_n$. The question is very natural from the analytical point of view and leads readily to the introduction and the study of ORFs. Nevertheless, the probabilistic  meaning of $\cl_n$ seems to be rather "opaque". One of questions we wanted to look at was to find a probabilistic interpretation for the described procedure.

Second, the book by Bultheel et al. \cite{bu1} treats, besides other interesting topics, the moment problems coming from ORFs (see Chapter 10). This made us think of a monograph by Krein-Nudelman \cite{krnu}, entirely devoted to generalized moment problems, descriptions of their solutions, the study of extremal solutions and different variations on the topic. The strong  appeal was to compare the results of both books and to attempt to "mix up" their approaches.

Following Krein-Nudelman, we introduce rather general class of moment problems and obtain the necessary and sufficient condition for their solvability (Section \ref{s1}). The questions on determinacy/indeterminacy of the problem, the description of solutions and the study of extremal solutions are left aside. Then we define a class of varying Gaussian processes and we obtain their spectral representation.  The term "varying" is intended to be synonymous to "generalized stationary", but seems to better reflect the situation. It means that the covariance matrices of the process change accordingly to a concrete rule determined by a sequence of external (and fixed) parameters. Corresponding prediction problems are formulated in a natural way; their solutions involve generalized orthogonal polynomials (or orthogonal Laurent polynomials). We specify the construction for the case of ORFs in Section \ref{s3}. This supplements \cite[Ch.~10]{bu1} with a solvability criterion (where the solvability conditions are not discussed) and provides an unified and natural approach to the problems of this type. 

\subsection{Generalized moment problems}\label{ss01}
Chapter 10 of the book by Bultheel et al. \cite{bu1} is devoted to the study of a special moment problem generated by the ORFs. The considered problem is a  particular case of  the concept of {\it a generalized moment problem} suggested by Krein-Nudelman in monograph \cite{krnu}. 
We adapt the latter point of view for our presentation since we believe that this "generalized" approach  allows one to gain in clarity as well as in generality. 

The content of this subsection is borrowed from the monograph by Krein-Nudelman \cite[Ch.~1-3]{krnu}. The definitions below are given for reader's convenience; we refer  to the book  \cite{krnu} for an extensive discussion and deep results on the subject. See also Grenander-Szeg\H o \cite{gre} for a classical presentation of a classical version of the topic.

All objects appearing in the first part of this subsection are real-valued. Let $\cm(\bt)$ be the set of positive finite Borel measures on $\bt$. Let $\fu=\{u_k\}_{k=0,\dots, \infty}$ be a system of continuous linearly independent real-valued functions defined on $\bt=\{|z|=1\}$ (or an interval of $\br$). Put $\cl_n=\cl_{n,\fu}$ to be the linear span of $u_0,\dots, u_n$. 

Let the system $\fu$ satisfy the following property: there are coefficients $\{a'_k\}_k, a'_k\in\br,$ such that
\begin{equation}\label{e01}
\sum^n_{k=0} a'_k u_k > 0
\end{equation}
on $\bt$. Let $\fc :\cl_n\to \br$  be the linear functional 
$$
\fc(\sum^n_k a_k u_k)=\sum^n_k a_k c_k,
$$
where $c_k=\fc(u_k)\in\br$.  The functional is called positive, if $\fc(f)\ge 0$ provided $f\in \cl_n, f\ge 0$ on $\bt$. The sequence $\{c_k\}_{k=0,\dots,\infty}$ is called positive (w.r.to $\fu$), if it defines a positive functional.  

The following theorem is of fundamental importance.
\begin{theorem}[{\cite{krnu}, Ch. 3, Theorem 1.1}]\label{t01} Let the system $\fu$ satisfy \eqref{e01}. The following assertions are equivalent:
\begin{itemize}
\item[-] the functional $\fc$ is positive (i.e., the sequence $\{c_k\}_k$ is positive),
\item[-] $\{c_k\}$ is a sequence of generalized moments, that is, there is a $\s\in \cm(\bt)$ such that
$$
c_k=\int_\bt u_k d\s.
$$
\end{itemize}
\end{theorem}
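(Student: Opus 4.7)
The easy direction is immediate: if $c_k=\int_\bt u_k\,d\s$ for some $\s\in\cm(\bt)$, then for any $f=\sum a_k u_k\in\cl_n$ with $f\ge 0$ on $\bt$ one has $\fc(f)=\sum a_k c_k=\int_\bt f\,d\s\ge 0$, so $\fc$ is positive.

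For the nontrivial direction, my plan is to extend $\fc$ to a positive linear functional on all of $C(\bt,\br)$ and invoke the Riesz representation theorem. The natural tool is the M.~Riesz extension theorem, realized here via Hahn--Banach with a well chosen sublinear majorant. First I would use the hypothesis \eqref{e01}: set $u^\ast:=\sum_{k=0}^n a'_k u_k$; since $u^\ast$ is continuous and strictly positive on the compact set $\bt$, we have $u^\ast\ge\ep$ for some $\ep>0$. In particular, for every $g\in C(\bt,\br)$ the element $f:=(\|g\|_\infty/\ep)\,u^\ast$ lies in $\cl_n$ and satisfies $f\ge g$ on $\bt$.

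Next I would define
$$
p(g):=\inf\{\fc(f):f\in\cl_n,\ f\ge g\text{ on }\bt\},\qquad g\in C(\bt,\br).
$$
The observation above shows the infimum is taken over a nonempty set. To see $p(g)>-\infty$, pick any $h\in\cl_n$ with $h\ge -g$; then $f+h\ge 0$ for any admissible $f$, so by positivity of $\fc$, $\fc(f)\ge -\fc(h)$. Positive homogeneity and subadditivity of $p$ follow routinely from the linearity of $\fc$ and the fact that admissible $f$'s add. Crucially, $p(g)=\fc(g)$ for $g\in\cl_n$: clearly $p(g)\le\fc(g)$, and for any admissible $f$, $f-g\in\cl_n$ with $f-g\ge 0$ gives $\fc(f)\ge\fc(g)$. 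Now Hahn--Banach extends $\fc$ from $\cl_n$ to a linear $\tilde\fc:C(\bt,\br)\to\br$ with $\tilde\fc(g)\le p(g)$ everywhere. To check positivity of $\tilde\fc$: if $g\ge 0$ then $0\in\cl_n$ satisfies $0\ge -g$, so $p(-g)\le\fc(0)=0$, hence $-\tilde\fc(g)=\tilde\fc(-g)\le 0$.

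Finally, the Riesz representation theorem applied to the positive functional $\tilde\fc$ on $C(\bt,\br)$ yields $\s\in\cm(\bt)$ with $\tilde\fc(g)=\int_\bt g\,d\s$ for every continuous $g$; restricting to $g=u_k$ produces the required moment representation $c_k=\int_\bt u_k\,d\s$. The main obstacle I anticipate is verifying that the sublinear majorant $p$ is finite and agrees with $\fc$ on $\cl_n$; both hinge on having at our disposal a strictly positive element of $\cl_n$, which is exactly what hypothesis \eqref{e01} supplies. Everything else (complex-valued analogues, if needed, via real/imaginary parts; regularity of $\s$) is standard.
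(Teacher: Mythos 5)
Your proof is correct and complete. The paper itself gives no argument for Theorem \ref{t01} --- it is quoted verbatim from Krein--Nudelman \cite{krnu} --- and the M.~Riesz extension via Hahn--Banach with the sublinear majorant $p(g)=\inf\{\fc(f):f\in\cl_n,\ f\ge g\}$, made legitimate by the strictly positive element guaranteed by \eqref{e01}, is precisely the classical proof found in that reference, so there is nothing to reconcile.
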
 
The same result verbatim holds for infinite systems $\{u_k\}_{k=0,\dots,\infty}$ and $\{c_k\}_{k=0,\dots,\infty}$.

We now switch to complex-valued objects. Let $\fw=\{w_k\}_{k=0,\dots, \infty}$ be a system of continuous complex-valued functions on $\bt$. We require that $\{w_k\}_k\cup\{\bar w_k\}_k$ be linearly independent (w.r.to $\bc$). Setting $\fu=\{u_k\}_{k=0,\dots,\infty}$, $\fv=\{v_k\}_{k=0,\dots,\infty}$ with $u_k=\re w_k, v_k=\im v_k$, we rewrite the real-valued functional $\fc$ defined originally on $\cl_{n, \fu\cup\fv}$ as a complex-valued linear functional (denoted by the same letter) on $\cl_{n,\fw}$ defined as $\fc(u_k+i v_k)=\fc(u_k)+i\fc(v_k)$. It is convenient to put $w_{-k}=\bar w_k$ and one sees $\fc(\bar w_k)=\ovl{\fc(w_k)}$. 

From now on, $\cl_{n,\fw}$ is abbreviated as $\cl_n$. The functions  from $\cl_n$ might be thought of as "analytic polynomials" of degree $n$; the space $\bar\cl_n +\cl_n$ contains "trigonometric polynomials" of degree $n$.
\begin{theorem}[{\cite{krnu}, Ch.~3, Theorem 1.3}]\label{t02} Let the system $\fw$ satisfy
\begin{equation}\label{e02}
\sum^n_{k=0} (a_k w_k+\ovl{a_k w_k})>0
\end{equation}
for some coefficients $\{a_k\}, a_k\in \bc$. Then the following assertions for $\{c_k\}_k, c_k\in\bc,$ are equivalent:
\begin{itemize}
\item[-] The relation 
\begin{equation}\label{e03}
\sum^n_{k=0} (a_k w_k+\ovl{a_k w_k})\ge0
\end{equation}
implies that
$$
\sum^n_{k=0} (a_k c_k+\ovl{a_k c_k})\ge 0.
$$
\item[-] The coefficients $\{c_k\}_k$ are generalized moments (w.r.to $\fw$), that is: there exists a measure $\s\in\cm(\bt)$ with the property
$$
c_k=\int_\bt w_k d\s.
$$
\end{itemize}
\end{theorem}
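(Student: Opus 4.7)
The plan is to reduce the complex-valued Theorem \ref{t02} to the real-valued Theorem \ref{t01} by packaging the pair $(\re w_k, \im w_k) = (u_k, v_k)$ into a single real system $\fu \cup \fv$ and translating all hypotheses and conclusions accordingly. The converse direction of the equivalence is immediate: integrating \eqref{e03} against any representing measure $\s$ turns a nonnegative function into a nonnegative real number, so I focus on the forward implication.

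First, I would decompose each $w_k = u_k + i v_k$. The hypothesis that $\{w_k\}\cup\{\bar w_k\}$ is $\bc$-linearly independent is equivalent to the $\br$-linear independence of $\fu\cup\fv$, since the formulas $u_k = (w_k+\bar w_k)/2$ and $v_k = (w_k-\bar w_k)/(2i)$ provide an invertible change of basis. Thus the combined real system $\fu\cup\fv$ fits the framework of Theorem \ref{t01}.

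Second, I would translate the two ingredients of Theorem \ref{t02} into real language. Writing $a_k = p_k + i q_k$, one gets
$$
\sum_{k=0}^n (a_k w_k + \ovl{a_k w_k}) = 2\sum_{k=0}^n \re(a_k w_k) = \sum_{k=0}^n(2 p_k\, u_k - 2 q_k\, v_k),
$$
so the $\br$-span of real-valued combinations from $\bar\cl_n+\cl_n$ is exactly $\cl_{n,\fu\cup\fv}$. Setting $\a_k := 2 p_k$, $\b_k := -2 q_k$, the positivity condition \eqref{e02} becomes precisely \eqref{e01} for $\fu\cup\fv$. A parallel computation gives
$$
\sum_{k=0}^n (a_k c_k + \ovl{a_k c_k}) = \sum_{k=0}^n (\a_k \re c_k + \b_k \im c_k),
$$
so the implication "\eqref{e03} $\Rightarrow$ $\sum(a_k c_k+\ovl{a_k c_k})\ge 0$" is exactly the positivity (in the sense of Theorem \ref{t01}) of the $\br$-linear functional $\ti\fc:\cl_{n,\fu\cup\fv}\to\br$ defined on generators by $\ti\fc(u_k) := \re c_k$, $\ti\fc(v_k) := \im c_k$.

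Third, I would invoke Theorem \ref{t01} applied to the real system $\fu\cup\fv$ and the positive functional $\ti\fc$. This produces a measure $\s\in\cm(\bt)$ with $\int_\bt u_k\,d\s = \re c_k$ and $\int_\bt v_k\,d\s = \im c_k$; combining real and imaginary parts gives $c_k=\int_\bt w_k\,d\s$, as desired. There is no serious obstacle here: the whole argument is bookkeeping once one recognizes that Theorem \ref{t02} is the complex-valued repackaging of Theorem \ref{t01} under the real/imaginary part decomposition. The only point requiring a moment of care is the bijectivity of the correspondence $a_k\leftrightarrow(\a_k,\b_k)$ between complex coefficient sequences giving real-valued combinations of the $w_k,\bar w_k$ and real coefficient sequences in the $u_k,v_k$, which is exactly what the $\bc$-linear independence of $\{w_k\}\cup\{\bar w_k\}$ secures.
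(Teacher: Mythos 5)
Your proposal is correct and follows exactly the route the paper indicates: the paper's entire proof is the one-line remark that one applies Theorem \ref{t01} to the real part of $\sum_{k=0}^n a_k w_k$, i.e., to the real system $\fu\cup\fv$, which is precisely the reduction you carry out in detail. Your bookkeeping (the correspondence $a_k\leftrightarrow(2p_k,-2q_k)$, the translation of \eqref{e02} into \eqref{e01}, and the reassembly $c_k=\int_\bt w_k\,d\s$ from the real and imaginary moments) is accurate and fills in exactly what the paper leaves implicit.
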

The proof is by application of Theorem \ref{t01} to the real part of $\sum^n_{k=0} a_kw_k$.

\subsection{Spectral representation for stationary Gaussian processes}\label{ss02}
We recall some highly standard facts from the theory of stochastic processes; much more information on the topic is, for instance, in the monographs by Breiman \cite{bre} or Lamperti \cite{la}.

Let $\{\oo, \cf, P\}$ be a probability space and $\{X_n\}_{n\in\bz}, X_n:\oo\to \bc$ be a family of random variables.   The system $\bx=\{X_n\}_n$ is called a stochastic process. 
We assume that $E(X_n)=0, E(|X_n|^2)<\infty$.

Suppose now that $X_n$ have joint normal distribution; then $\{X_n\}$ is called a (stochastic) Gaussian process. For $\{X_{n_1},\dots, X_{n_m}\}$ define the covariance matrix by
$$
C=C_{n_1,\dots, n_m}=[c_{n_j n_k}], \quad c_{n_j n_k}=E(X_{n_k}\bar X_{n_j}),
$$
where $j,k=1,\dots, m$. The process is stationary if the covariance matrix is invariant w.r.to the shift, i.e.,
$$
C_{n_1,\dots, n_m}=C_{n_1+l,\dots, n_m+l}.
$$
A "stationary Gaussian process" will be often abbreviated as SGP or SG-process.

It follows from Herglotz' theorem (see \cite[Theorem 11.19]{bre}) that there is the unique $\mu\in\cm(\bt)$, called the spectral measure of the process, with the property $c_{jk}=\int_\bt t^{k-j} d\mu(t)$. 

The following theorem uses the concept of stochastic integral (see \cite[Ch.~11, Sect.~6]{bre} once again) and it gives the spectral representation of a SGP.

\begin{theorem}[{\cite{bre}, Theorem 11.21}]\label{t03} 
Let $\{X_n\}_n$ be a SGP and $\mu$ be its spectral measure. Then there exists a unique family of random variables $Z_\xi=Z( . , \xi), \xi\in\bt$ (i.e., $Z=Z(\o,\xi)$ with $\o\in\oo,\xi\in\bt$) having the properties:
\begin{enumerate}
\item For $\{\xi_1,\dots,\xi_k\}, \xi_j\in\bt, \xi_i\not =\xi_j$, the random variables $\{Z_{\xi_j}\}_j$ are jointly normally distributed.
\item For $I=[\xi_1,\xi_2)\subset \bt$, one writes $Z(I)=Z_{\xi_2}-Z_{\xi_1}$ and
$$
E(|Z(I)|^2)=\mu(I),\quad E(Z(I_1)\bar Z(I_2))=0, 
$$
for $I_1\cap I_2=\emptyset$.
\item Finally,
$$
X_n=\int_\bt \xi^n dZ( . , \xi).
$$
\end{enumerate}
\end{theorem}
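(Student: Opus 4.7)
The plan is to construct a unitary map from $L^2(d\mu)$ onto the closed linear span of the $X_n$'s in $L^2(\oo,\cf,P)$, and to define $Z$ as the image of indicator functions of arcs under this isometry; all three conclusions then follow by transporting standard properties of $L^2(d\mu)$ to the probabilistic side.

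First, I would let $\ch_\bx$ be the closed linear subspace of $L^2(\oo,\cf,P)$ spanned by $\{X_n\}_{n\in\bz}$. For any trigonometric polynomial $p(\xi)=\sum_n a_n\xi^n$ with finitely many nonzero coefficients, stationarity and Herglotz's theorem yield
$$
E\Bigl(\Bigl|\sum_n a_n X_n\Bigr|^2\Bigr)=\sum_{n,m}a_n\bar a_m c_{nm}=\int_\bt |p(\xi)|^2\,d\mu(\xi),
$$
so the map $U_0: p\mapsto \sum_n a_n X_n$ is a well-defined isometry from the trigonometric polynomials into $\ch_\bx$. Since trigonometric polynomials are dense in $L^2(d\mu)$ by Stone--Weierstrass, $U_0$ extends by continuity to an isometry $U:L^2(d\mu)\to \ch_\bx$; the range is dense because it contains every $X_n=U(\xi^n)$, so $U$ is unitary.

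Second, I would fix a reference point $\xi_0\in\bt$ and define $Z_\xi:=U(\chi_{[\xi_0,\xi)})$ for $\xi\in\bt$, using the natural arc-orientation of $\bt$. For $I=[\xi_1,\xi_2)$ we then have $Z(I)=Z_{\xi_2}-Z_{\xi_1}=U(\chi_I)$, so $E(|Z(I)|^2)=\|\chi_I\|^2_{L^2(d\mu)}=\mu(I)$ and $E(Z(I_1)\ovl{Z(I_2)})=\langle \chi_{I_1},\chi_{I_2}\rangle_{L^2(d\mu)}=0$ whenever $I_1\cap I_2=\emptyset$; this gives property (2). For (1), each $Z_\xi$ lies in $\ch_\bx$, hence is an $L^2$-limit of linear combinations of the $X_n$'s; since the family $\{X_n\}$ is jointly Gaussian and joint Gaussianity is preserved under $L^2$-limits, the family $\{Z_\xi\}$ is jointly Gaussian.

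Third, I would define the stochastic integral by $\int_\bt f\, dZ:=U(f)$ for $f\in L^2(d\mu)$. On step functions $f=\sum_j c_j\chi_{I_j}$ this coincides with $\sum_j c_j Z(I_j)$, and by isometry it agrees with the classical $L^2$-construction of stochastic integration against the orthogonally scattered measure $Z$. Specializing to $f(\xi)=\xi^n$ gives property (3): $\int_\bt \xi^n dZ=U(\xi^n)=X_n$. Uniqueness follows from the fact that any $Z'$ satisfying (1)--(3) induces, via $\chi_I\mapsto Z'(I)$, an isometry $L^2(d\mu)\to L^2(\oo)$ which coincides with $U$ on the dense set of trigonometric polynomials (by property (3)), hence everywhere, forcing $Z'=Z$.

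The only substantive step is the initial identification of covariances as Fourier coefficients of $\mu$ that turns $U_0$ into an isometry; everything afterwards is a routine transfer through the unitary $U$. The one minor subtlety worth noting is the preservation of Gaussianity under $L^2$-limits, which ensures that indicator images (and hence stochastic integrals of $L^2$ functions) stay within the Gaussian world.
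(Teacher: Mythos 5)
Your proof is correct and follows essentially the same route as the paper: the paper does not prove Theorem \ref{t03} directly (it is quoted from Breiman), but its proof of the generalization, Theorem \ref{t3}, is exactly your argument specialized back to $w_n=\xi^n$ --- build the unitary $U$ from the covariance identity, set $Z_\xi$ equal to the image of an arc indicator, and transfer properties (1)--(3) through $U$, with joint Gaussianity preserved under $L^2$-limits. The only (welcome) addition on your side is the explicit uniqueness argument, which the paper's proof leaves implicit.
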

Denote by $L^2(\bx)$ the closed linear span of $\bx$; this is a  subspace of $L^2(dP)$. Moreover, 
$L^2(\bx)$ is a Hilbert space with the scalar product $(X,Y)=E(X\bar Y)$. The theorem says that the map $U: L^2(\bx) \to L^2(d\mu)$ defined by $U(X_n)=\xi^n$ is a unitary one. If $\cz:L^2(\bx)\to L^2(\bx)$ acts as $\cz X_n=X_{n+1}$, its image under conjugation by $U$ (denoted by the same letter) is the usual shift, $\cz f(t)=t f(t), f\in L^2(d\mu)$.

It is not difficult to guess that the classical orthogonal polynomials w.r.to $\mu$ appear in this framework as solutions of forward/backward prediction problems, see \cite[Ch.~12]{bu1}. One of the purposes of this paper is to show that the ORFs play a similar role for a class of {\it  varying Gaussian processes}, see Section \ref{s2}.

\section{Generalized moment problem revisited}\label{s1}

We specify a little the construction from Section \ref{ss01} to fit our needs.  The essential part of the business is in \cite[Ch.~1,3]{krnu}.
Let $\fw=\{w_k\}_k$ be a system of functions described above and satisfying \eqref{e02}. Suppose that $\fw$ has few additional properties: 
\begin{enumerate}
\item $w_0=1$, \label{p1}
\item One has $\bar w_j w_k\in \bar\cl_j+\cl_k$, or, equivalently,
\begin{equation}\label{e1}
\bar w_j w_k=\sum^k_{s=-j} \b_{jk, s} w_s,
\end{equation}
where $\{\b_{jk, s}\}_s$ are some coefficients and, as before, $w_{-s}=\bar w_s$.
\item We have the following factorization property: if
$$
\sum^n_{k=0}(a_k w_k+ \ovl{a_kw_k})\ge 0
$$
on $\bt$, then necessarily
\begin{equation}\label{e2}
\sum^n_{k=0}(a_k w_k+ \ovl{a_kw_k})=\lt|\sum^n_{k=0} b_k w_k\rt|^2
\end{equation}
for some $\{b_k\}$. 
\end{enumerate}
Notice that the coefficients $\{\b_{jk, s}\}$ are uniquely determined by the system $\fw$. 

For a $\s\in\cm(\bt)$, the matrix $C_n=C_{n,\fw,\s}=[c_{jk}]_{j,k=0,\dots, n}$ is defined as 
\begin{equation}\label{e31}
c_{jk}=\int_\bt \bar w_j w_k d\s.
\end{equation}
Since $w_0=1$, one has  $c_{0k}=c_k$ and, moreover,
\begin{equation}\label{e3}
c_{jk}=\sum^k_{s=-j} \b_{jk, s} c_s,
\end{equation}
where, once again, $c_{-s}=\bar c_s$ and obviously $c_{jk}=\bar c_{kj}$. We say that a matrix $C_n=C_{n,\fw}=C_n^*\ge 0$ is a generalized Toeplitz matrix w.r.to $\fw$ ($\fw$-GTM, GTM or GT-matrix, for short), if its entries satisfy relations \eqref{e3}.  An interesting and important question is whether all $\fw$-GT-matrices come from a $\s\in \cm(\bt)$.

The answer (also explaining the terminology) is given by a slightly modified version of Theorem \ref{t02}. It says, in particular, that relations \eqref{e31} and \eqref{e3} define the same object. Everything below is "w.r.to $\fw$".

\begin{theorem}\label{t2} The following assertions are equivalent:
\begin{itemize}
\item[\it (1)] the GT-matrix $C=[c_{jk}]_{j, k=0,\dots, n}$ is positive, $C\ge 0$,
\item[\it (2)] the sequence $\{c_k\}_{k=0,\dots, n}$ is positive (in the sense of implication \eqref{e03}),
\item[\it (3)] there is a measure $\s\in\cm(\bt)$ with the property
$$
c_{jk}=\int_\bt \bar w_j w_k d\s.
$$
\item[\it (3')] one has
$$
c_k=\int_\bt w_k d\s.
$$
\end{itemize}
\end{theorem}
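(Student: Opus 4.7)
The plan is to establish the cycle $(3')\Rightarrow(3)\Rightarrow(1)\Rightarrow(2)\Rightarrow(3')$, with the last and heaviest step being a direct appeal to Theorem \ref{t02}, since hypothesis \eqref{e2} on $\fw$ is exactly the condition \eqref{e02} required there. The three remaining implications are structural manipulations that exploit the additional properties (i)--(iii) of $\fw$ listed just above the statement.

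The equivalence $(3)\Leftrightarrow(3')$ is largely bookkeeping. If $c_k=\int_\bt w_k\,d\s$, then integrating \eqref{e1} against $\s$ gives $\int_\bt \bar w_j w_k\,d\s=\sum_s \b_{jk,s}c_s=c_{jk}$, where the last equality is precisely the GT-matrix definition \eqref{e3}; this yields $(3')\Rightarrow(3)$. Conversely, taking $j=0$ in $(3)$ and using $w_0=1$ (property (i)) gives back $(3')$. For $(3)\Rightarrow(1)$, a direct calculation using the integral representation shows that for any $(a_0,\dots,a_n)\in\bc^{n+1}$,
$$
\sum_{j,k=0}^n \ovl{a_j}\,a_k\,c_{jk}=\int_\bt \Big|\sum_{k=0}^n a_k w_k\Big|^2 d\s\ge 0,
$$
so $C\ge 0$.

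The substantive algebraic step is $(1)\Rightarrow(2)$, where property (iii) is crucial. Assume $C\ge 0$ and that \eqref{e03} holds. By (iii) we may write
$$
\sum_{k=0}^n(a_k w_k+\ovl{a_k w_k})=\Big|\sum_{k=0}^n b_k w_k\Big|^2=\sum_{j,k=0}^n\ovl{b_j}b_k\,\bar w_j w_k
$$
for some coefficients $\{b_k\}$. The linear functional $\fc$ is well defined on $\bar\cl_n+\cl_n$ by $\fc(w_s)=c_s$, $\fc(\bar w_s)=\ovl{c_s}$, thanks to the standing linear independence of $\{w_k\}\cup\{\bar w_k\}$. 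Applying $\fc$ to both sides of the displayed identity, the left side becomes $\sum_k(a_k c_k+\ovl{a_k c_k})$, while the right side, by \eqref{e1} followed by \eqref{e3}, equals $\sum_{j,k}\ovl{b_j}b_k\,c_{jk}\ge 0$ since $C\ge 0$. This is $(2)$, and $(2)\Rightarrow(3')$ is then Theorem \ref{t02}.

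The main delicate point I anticipate is in this $(1)\Rightarrow(2)$ step: one must carefully distinguish between viewing elements of $\bar\cl_n+\cl_n$ as functions on $\bt$ and as formal linear combinations in the basis $\{w_s\}\cup\{\bar w_s\}$, and make sure that $\fc$ is applied at the level of coefficients in this basis. The linear independence of $\{w_k\}\cup\{\bar w_k\}$ is exactly what guarantees that $\fc$ is well defined and that the two viewpoints agree, so that \eqref{e2} can be legitimately pushed through $\fc$.
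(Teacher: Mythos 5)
Your proof is correct and follows essentially the same route as the paper: the $(3)\Leftrightarrow(3')$ equivalence via \eqref{e1} and \eqref{e3}, the standard quadratic-form computation for $(3)\Rightarrow(1)$, the factorization property \eqref{e2} combined with \eqref{e1}, \eqref{e3} to push the functional through for $(1)\Rightarrow(2)$, and Theorem \ref{t02} for $(2)\Leftrightarrow(3')$. Your added remark on the well-definedness of $\fc$ on $\bar\cl_n+\cl_n$ via the linear independence of $\{w_k\}\cup\{\bar w_k\}$ is exactly the point the paper uses implicitly.
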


\begin{proof}
It is plain that (3) and (3') are equivalent. Indeed, (3) implies (3') since $c_k=c_{0k}=\int_\bt w_k d\s$. Conversely,
let $c_k=\int_\bt w_k d\s$ for any $k$. So, by \eqref{e3} and \eqref{e1},
$$
c_{jk}=\sum^k_{s=-j} \b_{jk, s} c_s= \int_\bt \sum^k_{s=-j} \b_{jk, s} w_s d\s=\int_\bt \bar w_j w_k d\s.
$$

Now, claims (2) and (3') are equivalent by Theorem \ref{t02}. Claim (3) yields (1) by the standard argument (see, for example, \cite[Ch. 3]{krnu}). The only implication to prove is  that (1) $\Rightarrow$ (3'), for instance.

By Theorem \ref{t02}, we have to prove that if $\sum^n_{k=0} (a_kw_k + \ovl{a_kw_k})\ge 0$ for some $\{a_k\}$, then necessarily $\sum^n_{k=0} (a_k c_k + \ovl{a_kc_k})\ge 0$.  By \eqref{e2}, one has
$$
\sum^n_{k=0} (a_kw_k + \ovl{a_kw_k})= \sum^n_{j,k=0} \bar b_j b_k \bar w_j w_k,
$$
and, consequently,
\begin{eqnarray*}
\sum^n_{k=0} (a_k c_k + \ovl{a_k c_k})&=& \sum^n_{k=0} (a_k T(w_k) + \bar a_k T(w_{-k}))\\
 &=&\sum^n_{j,k=0} \bar b_j b_k T(\bar w_j w_k)=\sum^n_{j,k=0} \bar b_j b_k \sum^k_{s=-j} \b_{jk, s} T(w_s) \\
 &=&\sum^n_{j,k=0} \bar b_j b_k \sum^k_{s=-j} \b_{jk, s} c_s=\sum^n_{j,k=0} \bar b_j b_k  c_{jk}\ge 0.
\end{eqnarray*}
The proof is complete.
\end{proof}

Let $\fw=\{w_k\}, \fw'=\{w'_k\}$ be two systems having the above properties plus that $\cl_n=lin\{w_k\}_{k=0,\dots, n}=lin\{w'_k\}_{k=0,\dots, n}$. Suppose also that
\begin{equation}\label{e30}
w'_k=\sum^{k}_{s=0}d_{sk} w_s,
\end{equation}
$d_{kk}\not =0$, so that the matrix $D=D_{\fw\to\fw'}=[d_{sk}]_{s,k=0,\dots, n}$ of the change of the basis is upper triangular and non-degenerate. The GT-matrices $C_{n,\fw'}$ and $C_{n,\fw}$ are connected in the obvious way
\begin{equation}\label{e301}
 C_{n,\fw'}=D^* C_{n, \fw} D.
\end{equation}
Certainly, this is an equivalence relation and we may get from $\fw'$ to $\fw$ if necessary.

\section{Varying Gaussian processes}\label{s2}
\subsection{Spectral representation for the process}\label{s21}
The point is to carry the construction of Section \ref{ss02} over the processes whose covariance matrices are generalized Toeplitz matrices in the sense of the previous section.

So, let $\fw$ be a fixed (infinite) system of functions. In addition to assumptions (1)-(3) on page \pageref{p1}, 
we suppose that the family $\fw$ is separating, that is:
\begin{enumerate}
\item[(4)] If, for a $\mu\in\cm(\bt)$, one has \label{p2}
$$
\int_\bt w_k d\mu =0
$$
for all $k$, then $\mu=0$.
\end{enumerate}
 
Before giving the definitions, let us discuss the terminology we use. Candidate labels for the objects introduced below, were "generalized stationary Gaussian processes" and "non-stationary Gaussian processes". It turned out, however, that these names led to some misunderstanding instead of clarifying the picture. For this reason, we stick with the name "a varying Gaussian process". This sentence means that the statistics (e.g., covariance matrices, etc.) of a Gaussian process under consideration vary accordingly to a prescribed law depending on a system of external parameters we may choose up to some extent. Of course, the class contains stationary Gaussian processes. We do not imply at all that the processes from the above class are non-stationary in a "wild" sense (i. e., the statistics of the process change absolutely arbitrarily with the shift of the index).

So, we say that $\bx$ is a varying Gaussian process (abbreviations: a $\fw$-VGP, a VGP, or a VG-process), if  $C=C_{n_1,\dots, n_m}=C(X_{n_1},\dots X_{n_m})$ is a generalized Toeplitz matrix w.r.to $\{w_{n_1},\dots, w_{n_m}\}$. We denote by $\mathbf{C}=\mathbf{C_X}=\{C_n\}_n$ the sequence of the covariance matrices of the process; they are $\fw$-GT matrices by definition. Theorem \ref{t2} implies that there exists the unique $\s\in\cm(\bt)$ such that
$$
C=C_{n_1,\dots, n_m}=\lt[\int_\bt \bar w_{n_j} w_{n_k} d\s\rt]_{j,k}.
$$
A counterpart to Theorem \ref{t03} in our situation is as follows.
\begin{theorem}\label{t3}
Let $\bx$ be a VGP and $\s$ be its spectral measure. Then there exists a unique family of random variables $Z_\xi=Z( . , \xi), \xi\in\bt$ with the properties:
\begin{enumerate}
\item For $\{\xi_1,\dots,\xi_k\}, \xi_j\in\bt, \xi_i\not =\xi_j$, the random variables $\{Z_{\xi_j}\}_j$ are jointly normally distributed.
\item For $I=[\xi_1,\xi_2)\subset \bt$, one writes $Z(I)=Z_{\xi_2}-Z_{\xi_1}$ and
$$
E(|Z(I)|^2)=\s(I),\quad E(Z(I_1)\bar Z(I_2))=0, 
$$
for $I_1\cap I_2=\emptyset$.
\item Finally,
$$
X_n=\int_\bt w_n(\xi) dZ( . , \xi).
$$
\end{enumerate}
\end{theorem}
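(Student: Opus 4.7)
The plan is to mirror Breiman's proof of Theorem~\ref{t03}: build a unitary $U : L^2(\bx) \to L^2(d\s)$ that sends $X_n \mapsto w_n$, and then recover $Z$ by inverting $U$ on indicator functions $\chi_I$.

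First I would verify that the assignment $\sum_k a_k X_{n_k} \mapsto \sum_k a_k w_{n_k}$ extends from finite sums to a well-defined isometry. The norm identity
\begin{equation*}
\Bigl\| \sum_k a_k X_{n_k} \Bigr\|_{L^2(dP)}^2 = \sum_{j,k} a_k \bar a_j\, c_{n_j n_k} = \int_\bt \Bigl| \sum_k a_k w_{n_k} \Bigr|^2 d\s
\end{equation*}
is immediate from the VGP hypothesis together with the integral representation $c_{jk} = \int_\bt \bar w_j w_k\, d\s$ furnished by Theorem~\ref{t2}. Continuity then promotes this assignment to an isometry $U$ of $L^2(\bx)$ onto the closure $\ch \subset L^2(d\s)$ of $lin\{w_k : k \ge 0\}$.

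Next I would show that $\ch = L^2(d\s)$. If $f \in L^2(d\s)$ is orthogonal to every $w_k$, then $d\mu := \bar f\, d\s$ is a complex Borel measure on $\bt$ with $\int_\bt w_k\, d\mu = 0$ for all $k$, and the separating hypothesis (4), read for complex measures, forces $\mu=0$ and hence $f = 0$ $\s$-a.e. With $U$ unitary in hand, I set $Z(A) := U^{-1}(\chi_A) \in L^2(\bx)$ for each Borel $A \subset \bt$, and $Z_\xi := Z([\xi_0,\xi))$ for a fixed reference $\xi_0 \in \bt$. Properties (1)--(3) then follow routinely: joint normality is inherited from the Gaussian character of $L^2(\bx) \subset L^2(dP)$; the orthogonality and variance formulas reduce, via the isometry, to
\begin{equation*}
E\bigl( Z(I_1) \overline{Z(I_2)} \bigr) = (\chi_{I_1}, \chi_{I_2})_{L^2(d\s)} = \s(I_1 \cap I_2),\qquad E|Z(I)|^2 = \s(I);
\end{equation*}
defining the stochastic integral as $\int f\, dZ := U^{-1}(f)$ on all of $L^2(d\s)$ reproduces Breiman's Riemann-type construction \cite[Ch.~11]{bre}, and specializing to $f = w_n$ yields (3). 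Uniqueness: any other family $Z'$ satisfying (1)--(3) induces a unitary $U'$ with $U'(X_n) = w_n$; equality on the dense set $\{X_n\}$ forces $U = U'$, hence $Z' = Z$.

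The main obstacle is the density/surjectivity step, since hypothesis (4) is literally stated only for positive $\mu \in \cm(\bt)$, whereas the orthogonality argument naturally delivers the complex measure $\bar f\, d\s$. The cleanest resolution is to observe that the span of $\{w_k\} \cup \{\bar w_k\}$ is, by property \eqref{e1} together with $w_0 = 1$, a unital conjugation-closed subalgebra of $C(\bt)$, so that Stone--Weierstrass reduces density to point-separation of $\fw$ — something verifiable in the concrete ORF setting of Section~\ref{s3}. Alternatively, one simply reads (4) for complex measures, the form in which the separating condition is customarily employed in the Krein--Nudelman framework. Once this density is granted, the remainder of the proof is a word-for-word transcription of the classical argument, with $w_n$ replacing $\xi^n$ and the generalized Toeplitz structure (decoded by Theorem~\ref{t2}) replacing the classical Toeplitz one.
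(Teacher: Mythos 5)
Your overall construction coincides with the paper's: the isometry $U\colon L^2(\bx)\to L^2(d\s)$ sending $X_n\mapsto w_n$ (justified by the generalized Toeplitz structure via Theorem \ref{t2}), the definition $Z_\xi=U^{-1}(\chi_{[\xi_0,\xi)})$, the identification of the stochastic integral with $U^{-1}$, the verification of (1)--(3) through the isometry, and the uniqueness argument are all exactly the paper's steps. The only point of divergence is how you justify surjectivity of $U$, i.e.\ that $clos\,lin\{w_k\}=L^2(d\s)$; the paper simply invokes the separation property (4), which is your second ("alternative") option of reading (4) for complex measures, and with that reading your proof closes word for word.

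Your preferred patch via Stone--Weierstrass, however, does not work, for two reasons. First, property \eqref{e1} only controls the \emph{mixed} products $\bar w_j w_k$; it says nothing about $w_jw_k$, so $lin(\{w_k\}\cup\{\bar w_k\})$ need not be a subalgebra of $\cc(\bt)$ --- for the ORF system $\fw_1$ the product $\cb_j\cb_k$ has double poles at the points $1/\bar\a_s$ and lies outside every $\bar\cl_n+\cl_n$. Second, and more seriously, even if that span were dense in $\cc(\bt)$, this would only show that no complex measure annihilates all $w_k$ \emph{and} all $\bar w_k$ simultaneously, whereas your orthogonality argument produces a measure $\bar f\,d\s$ known only to annihilate the $w_k$ themselves. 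These are genuinely different conditions: for $w_k=\xi^k$, $k\ge 0$, and $\s=m$ the conjugate-symmetric span is dense in $\cc(\bt)$, yet the nonzero measure $\xi\,dm$ annihilates every $w_k$ and $clos\,lin\{w_k\}=H^2\neq L^2(dm)$. So the density of $lin\{w_k:k\ge 0\}$ alone in $L^2(d\s)$ is a genuine hypothesis on the pair $(\fw,\s)$ and cannot be deduced from \eqref{e1} plus point separation; retain the complex-measure version of (4) as the standing assumption (as the paper in effect does) and the rest of your argument is a faithful reproduction of the paper's proof.
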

As in Section \ref{ss02}, one has the unitary map $U:L^2(\bx)\to \cl=clos\, lin_{L^2(d\s)}\{w_k\}=L^2(d\s)$ (by the separation property (4) on page \pageref{p2}) and $U(X_n)=w_n$. The shift defined as $\cz X_n=X_{n+1}$ goes to $\cz(\sum_k a_k w_k)=\sum_k a_k w_{k+1}$; the sum of course is finite.

\begin{proof} The argument follows the proof of Theorem \ref{t03} (= \cite[Theorem 11.21]{bre}) with Bochner's theorem replaced by Theorem \ref{t02}. We give it for the completeness of the presentation.

Let $\s$ be the spectral measure of a $\fw$-VG-process. Consider $L^2(\bx)=clos\, lin_{L^2(dP)}\, \bx$; this is obviously a Hilbert space w.r.to the scalar product
$(X,Y)=E(X\bar Y)$.  Define a map $U:L^2(\bx)\to L^2(d\s)$ by the relation $U X_n=w_n$ and extend it by linearity to finite linear combinations of $\{X_n\}$. The map is obviously isometric, since for $Y_i=\sum_k a_{i,k} X_k$, and $f_i=\sum_k a_{i,k} w_k,\ i=1,2$, one has
\begin{eqnarray*}
(Y_1,Y_2)&=&E\lt(\sum_{k,j} a_{1,k}\bar a_{2,j}X_k\bar X_j\rt)\\
&=&\sum_{k,j} a_{1,k}\bar a_{2,j} E(X_k\bar X_j)=\sum_{k,j} a_{1,k}\bar a_{2,j} c_{jk}\\
&=&\sum_{k,j} a_{1,k}\bar a_{2,j} \int_\bt w_k\bar w_j d\s=(\sum_k a_{1,k} w_k,\sum_k a_{2,j} w_j)_\lds\\
&=&(f_1,f_2)_\lds.
\end{eqnarray*}
We extend the map $U$ by continuity to act from $L^2(\bx)$ to $L^2(d\s)$. It is easy to see that it is one-to-one and hence unitary. 

Denote the (clockwise) arc $[1,\xi)\subset\bt$ by $I_\xi$. Since $\chi_{I_\xi}\in\lds$, we set $Z_\xi=U^{-1}(\chi_{I_\xi})\in L^2(\bx)$, where $\chi_{I_\xi}$ is the indicator function of $I_\xi$. We now verify the properties of $Z_\xi$ claimed in the formulation of the theorem. Let us start with (1): let  
$\{Y^n\}=\{(Y^n_1,\dots, Y^n_m)\}_{n=0,1,\dots}$ be a sequence of vector-valued random variables. Assume that $(Y^n_1,\dots, Y^n_m)$ are jointly normally distributed and $Y^n\to Y=(Y_1,\dots, Y_m)$ in $L^2(\bx)$. It follows that $(Y_1,\dots, Y_m)$ are jointly normally distributed and $C(Y_j\bar Y_k)=\lim_{n\to\infty} C(Y^n_j\bar Y^n_k)$ (actually one has to argue for real-valued random variables and then pass to complex-valued ones separating real and imaginary parts).

As for (2), we have for arcs $I,I_1,I_2\subset \bt, I_1\cap I_2=\emptyset$
$$
E(|Z(I)|^2)=(\chi_{I},\chi_{I})_\lds=\s(I).
$$
and
$$
E(Z(I_1)\bar Z(I_2))=(\chi_{I_1},\chi_{I_2})_\lds=0,
$$

For (3), let $f$ be a continuous (and hence uniformly continuous) function on $\bt$. Take a partition $\bt$ in a family of left-closed, right-open disjoint intervals $\{I_k\}$. Set $\l_k\in I_k$ and notice that
$$
\sum_k f(\l_k)\chi_{I_k}=U\lt(\sum_k f(\l_k) Z(I_k)\rt).
$$
When $\max_k|I_k|\to 0\ (k\to\infty)$, the argument of $U$ goes to $\int_\bt f(\xi)dZ( . ,\xi)$ (in $L^2(dP)$), and the left-hand side of the equality goes uniformly to $f$. Hence
$$
U^{-1} f=\int_\bt f(\xi) dZ( . ,\xi).
$$
Writing this for $f=w_n$, we come to
$$
X_n=\int_\bt w_n(\xi) dZ(.,\xi).
$$
The theorem is proved.
\end{proof}
We continue with few remarks on the result. First, observe that we can prove a likewise result for stochastic (varying) non-Gaussian processes dropping the first point of the theorem; see Lamperti \cite{la} for the stationary case.

Second, since we use only $L^2$-convergence, the same proof goes through for a fixed $\s$ with the property
\begin{equation}\label{e33}
\sup_n \int_\bt |w_n|^2\, d\s<\infty.
\end{equation}
The condition means of course 
\begin{equation}\label{e330}
\sup_n E(|X_n|^2)<\infty.
\end{equation}
in terms of the process $\mathbf{X}$.

Third, suppose two systems $\fw$ and $\fw'$  are related by \eqref{e301}, but $\fw'$ does not necessarily have property \eqref{e33}. Of course, Theorem  \ref{t3} does not work directly for $\fw'$-VG-processes. Nevertheless, applying an appropriate (non-stationary) linear filter to a $\fw'$-VG-process $\mathbf{Y}$, we can get to a $\fw$-VG-process $\mathbf{X}$, obtain its spectral representation from Theorem \ref{t3}, and return back to the initial process $\mathbf{Y}$ using the inverse filter. In formulas: let $D=D_{\fw\to\fw'}$, $D'=D^{-1}=D_{\fw'\to\fw}$. Given a $\fw'$-VG-process $\mathbf{Y}$, define the filtered process $\mathbf{X}$ (compare to \eqref{e30})
\begin{equation}\label{e302}
 X_k=\sum^k_{s=0} d'_{sk}Y_s.
\end{equation}
An easy computation shows $C_{n,\mathbf{X}}=D'^*C_{n,\mathbf{Y}} D'$, and the identification with \eqref{e301} shows that $\mathbf{X}$ is a $\fw$-VG-process. By the spectral representation theorem (i.e., Theorem \ref{t3}) $X_k=\int_\bt w_k(\xi) dZ_{\mathbf{X}}(.,\xi)$. On the other hand,
\begin{equation}\label{e303}
 Y_k=\sum^k_{s=0} d_{sk}X_s,
\end{equation}
and
\begin{equation}\label{e304}
 Y_k=\int_\bt \lt(\sum^k_{s=0} d_{sk} w_s(\xi) \rt) dZ_{\mathbf{X}}(.,\xi)=\int_\bt w'_k(\xi) dZ_{\mathbf{X}}(.,\xi).
\end{equation}
Hence,  speaking a bit loosely, Theorem \ref{t3} can be extended to VG-processes which are "similar", in a proper sense, to processes satisfying its initial assumptions.

\subsection{Forward, backward and forward/backward prediction problems for VGPs}
\label{s22}
We start with a bit of terminology. Let $\fw=\{w_k\}_{k=0,\dots, \infty}$ and $\s\in\cm(\bt)$. Since the family $\fw$ is linearly independent on $\bt$, it is also linearly independent in $L^2(d\s)$. The (generalized) orthogonal polynomials w.r.t to $\fw$ are denoted by $\{\p_k\}, ||\p_k||_{L^2(d\s)}=1$. They are well-defined and obtained by the Gram-Schmidt orthonormalization procedure from $\{w_k\}_k$ in $L^2(d\s)$. The (generalized) reversed orthogonal polynomials $\{\p^*_k\}$ w.r.to $\fw$ (warning: the notation is natural but slightly abusive!) are defined by orthonormalization procedure to satisfy  $\p^*_k\in \cl_k, ||\p^*_k||_{L^2(d\s)}=1$, and $(\p^*_k, w_j)=0$ for $j=1,\dots,k$. Equivalently, on can get $\{\p^*_k\}$ orthonormalizing $\{w_k,w_{k-1},\dots, w_0\}$ and taking the last element in the obtained sequence. The corresponding monic (also abusive!) orthogonal polynomials are denoted by $\{\pp_k\},\ \{\pp^*_k\}$.

When $\fw=\{w_k\}_{k\in\bz}$ is a bilateral  system of functions, we speak about (generalized) orthogonal Laurent polynomials (GOLP, GOL-polynomials) $\{\chi_k\}_{k=0,\dots,\infty}$ (alternatively,  $\{x_k\}_{k=0,\dots,\infty}$) rather than "usual" ones. They are defined by the Gram-Schmidt procedure from the sequence $\{w_0, w_1, w_{-1},$ $w_2, w_{-2}, \dots\}$. The sequence $\{x_k\}_k$ comes from orthonormalization of   $\{w_0, w_{-1},$ $ w_1, w_{-2}, w_2,\dots\}$. 
The monic versions of these sequences will be denoted by $\{\chi^m_k\}_k$ and $\{x^m_k\}_k$.

We are interested in one step prediction problems, see Bultheel et al. \cite[Ch.~12]{bu1} for the details. 
To formulate the problem, let $\cx_{0,n-1}=lin \{X_k : k=0,\dots, n-1\}$ and $\cx_{1,n}=lin \{X_k : k=1,\dots, n\}$. The forward prediction problem is to compute $\hat X_n=(X_n| \cx_{0,n-1})=Pr_{\cx_{0,n-1}} X_n$ and the backward one targets  $\check X_0=(X_0| \cx_{1, n-1})=Pr_{\cx_{1,n}} X_0$. The projections are understood in the $L^2(dP)$-sense and $( . | \dots)$ is a probabilistic notation for the object. Recall that the projections can be interpreted as conditioned random variables. We also look at corresponding innovation processes, $\hat E_n=X_n-\hat X_n$ and $\check E_0=X_0-\check X_0$.

By the spectral representation theorem (i.e., Theorem \ref{t3}), this is equivalent to the computation of projections $\hat w_n=Pr_{\cw_{0,n-1}} w_n$ and $\check w_0=Pr_{\cw_{1,n}} w_0$, where
$\cw_{0,n-1}=lin \{w_k : k=0,\dots, n-1\}$ and $\cw_{1,n}=lin \{w_k : k=1,\dots, n\}$.  It is not difficult to see that, by definition, $\hat w_n=w_n-\pp_n$ and $\check w_0=w_0-\pp^*_n$ etc. Recalling that $X_n=\cz^n X_0$, we get back to the process $\bx$. This gives
\begin{eqnarray}
\hat X_n=(\cz^n-\pp_n(\cz))X_0, &&  \hat E_n=\pp_n(\cz)X_0, \label{e35}\\
\check X_0=(I-\pp^*_n(\cz))X_0, && \check E_0=\pp^*_n(\cz)X_0.\nonumber
\end{eqnarray}


By mixed prediction problem (or backward-forward prediction problem) we understand an estimate of the present from a part of the past plus a part of the future. The construction goes along the same lines as above, that is why we give the formulation of the problem followed by its solution. So, let $\cx_{-n,-1;1,n}=lin \{X_k : k=-n,\dots, n,\ k\not =0 \},  \cx_{-(n+1),-1;1,n}=lin \{X_k : k=-(n+1),\dots, n,\ k\not =0 \}$, and $\cx_{-n,-1;1,n+1}=lin \{X_k : k=-n,\dots, n+1,\ k\not =0 \}$. The different mixed prediction problems are: find
$\ttt{X_0}=(X_0| \cx_{-n,-1;1,n}), \hhh{X_0}=(X_0| \cx_{-n,-1;1,n+1})$, and $\ccc{X_0}=(X_0|\cx_{-(n+1),-1;1,n})$ and expressions for corresponding innovation processes. The first case can be treated with the help either $\{\chi_k\}_k$ or $\{x_k\}_k$, the second case suggests the use of $\{\chi_k\}_k$, and the third one - the application of $\{x_k\}_k$. We obtain
\begin{eqnarray*}
&& \ttt{X_0}=(I-\chi^{m\, *}_{2n})(\cz)X_0,\quad \ttt{E_0}=X_0-\ttt{X_0}=\chi^{m\, *}_{2n}(\cz)X_0, \\
&&\hhh{X_0}=(I-\chi^{m\, *}_{2n+1})(\cz)X_0, \quad  \hhh{E_0}=X_0-\hhh{X_0}=\chi^{m\, *}_{2n+1}(\cz)X_0, \\
&& \ccc{X_0}=(I-x^{m\, *}_{2n+1})(\cz)X_0, \quad  \ccc{E_0}=X_0-\ccc{X_0}=x^{m\, *}_{2n+1}(\cz)X_0,  
\end{eqnarray*}


\section{Orthogonal rational functions and a special class of moment problems  and VG-processes}\label{s3}
The point of this section is to illustrate the construction from Sections \ref{s1}, \ref{s2} with the help of the so-called orthogonal rational functions  (ORFs, to be brief) on the unit circle $\bt$. The monograph by Bultheel et al. \cite{bu1} gives the state-of-art of the subject. A shorter overview is in Bultheel et al. \cite{bu2}. The paper Baratchart et al. \cite{bar1} contains some recent advances on the topic; our notation follows the last paper.

\subsection{ORFs and generalized moment problems}\label{s31}
Let $\{\a_k\}_{k=0,\dots,\infty}, \a_0=0$ be a given sequence of  $\bd$ and,  as in \cite{bar1}, 
\begin{equation}\label{e40}
\sum_k (1-|\a_k|)=+\infty.
\end{equation}
We suppose for simplicity that $\a_k\not =\a_j, k\not=j$; it goes without saying that the general situation can be treated along the same lines.
Let 
$$
\zeta_k(t)=\frac{t-\a_k}{1-\bar \a_k t},\quad \cb_0=1,\ \cb_k=\prod^k_{j=1} \z_j, 
$$
for $k\ge 1$. Furthermore, let 
$$
\cl_n=lin\{\cb_k: k=0,\dots,n\}=lin\lt\{\frac 1{1-\bar\a_k t}: k=0,\dots,n\rt\}.
$$
We consider the following systems $\fw_i=\{w_{ik}\},\ i=1,2,3$:
\begin{enumerate}\label{p3}
\item $w_{10}=1,\  w_{1k}=\cb_k,\  k=1,\dots,n,$
\item  $\dsp w_{20}=1,\  w_{2k}=\frac 1{1-\bar \a_k t},\  k=1,\dots,n,$
\item   $w_{30}=1,\ w_{3k}=\frac{t^k}{\prod^k_{j=1} (1-\bar\a_j t)},\  k=1,\dots,n$.
\end{enumerate}
Obviously, every system $\fw_i$  forms a basis of $\cl_n$. The first system is natural in applications to VG-processes with the property $\sup_n E(|X_n|^2)<\infty$. The second one is from \cite[Ch.~3, Sect.~2]{krnu} and appears readily from interpolation (e.g., Schur-Nevanlinna-Pick) problems. One of its lacks though is that the condition \eqref{e33} for any $\s\in\cm(\bt)$ is satisfied iff the sequence $\{\a_k\}$ is compactly contained in $\bd$.
To fix this, one can consider the system $\fw'_2$
$$
w'_{20}=1,\  w'_{2k}=\frac{1-|\a_k|}{1-\bar \a_k t},\  k=1,\dots,n
$$
instead. The third system is from \cite[Ch.~9]{bu1} and it has nice invariance properties w.r.to $*$-operation.

As the following proposition shows, the systems are completely equivalent to our purposes.
\begin{proposition}\label{pr1} We have:
\begin{enumerate}
\item The change-of-basis matrices taking one $\fw_i$ to another are upper triangular and non-degenerate.
\item The systems $\fw_i$ satisfy assumptions (1)-(3) and (4) on pp.~\pageref{p1}, \pageref{p2}. 
\end{enumerate}
\end{proposition}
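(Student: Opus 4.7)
The plan is to establish (1) first, because the equivalence of the three bases it provides then reduces each item of (2) to a check on a single system; I pick $\fw_1$. Each $w_{i,k}$ is a rational function in $\cl_k$ whose only new pole is simple and located at $t=1/\bar\a_k$. Expanding $w_{i,k}$ in the partial-fraction basis $\{1/(1-\bar\a_s t):\,0\le s\le k\}$ of $\cl_k$ therefore yields an upper-triangular change-of-basis matrix whose $(k,k)$-entry is the non-vanishing residue at $1/\bar\a_k$; this settles (1).

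For part (2), property~(1) is immediate from $w_{i,0}=1$. Property~(2) I verify for $\fw_1$ using the Blaschke modulus identity $\bar\cb_j=1/\cb_j$ on $\bt$: for $k\ge j$ the product $\bar\cb_j\cb_k=\prod_{i=j+1}^k\z_i$ is a rational function whose poles lie among $\{1/\bar\a_s:\,1\le s\le k\}$ and which is bounded at infinity, so by partial fractions it belongs to $\cl_k$; for $k<j$ the same product equals the complex conjugate of the analogous element of $\cl_j$, hence lies in $\bar\cl_j$. The equivalence of bases from (1) propagates property~(2) to $\fw_2,\fw_3$ because $\bar\cl_j+\cl_k$ depends only on the common linear span. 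For property~(3), I would clear denominators by setting $\pi_n(t)=\prod_{j=1}^n(1-\bar\a_j t)$ and using $\bar\pi_n(t)=t^{-n}\prod(t-\a_j)$ on $\bt$: the numerator of a non-negative $R\in\bar\cl_n+\cl_n$ expressed over the common denominator $\pi_n\bar\pi_n$ becomes, after multiplying through by $t^n$, a non-negative Laurent polynomial of degree at most $n$, to which the classical Fej\'er--Riesz theorem yields a factorization $|S|^2$ with $\deg S\le n$; then $Q:=S/\pi_n\in\cl_n$ realizes $|Q|^2=R$ on $\bt$. Property~(4) is handled by Malmquist--Takenaka completeness: assumption~\eqref{e40} ensures that $\{\cb_k\}_{k\ge 0}$ is complete in $H^2(\bt)$, and together with its complex conjugates complete in $C(\bt)$, so that any finite (complex) Borel measure annihilating all $w_k$ (hence also all $\bar w_k$) must vanish.

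The step I expect to be the main obstacle is the Fej\'er--Riesz factorization in property~(3): tracking degrees carefully through the multiplication by $|\pi_n|^2$ is needed to ensure the resulting Laurent polynomial has degree at most $n$, and the output $S/\pi_n$ must be shown to lie back in $\cl_n$ rather than in some larger rational space. Property~(4) is conceptually simpler but depends essentially on the divergence hypothesis~\eqref{e40}: without it the system would fail to be separating, and the spectral representation of Section~\ref{s2} would not apply.
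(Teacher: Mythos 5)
Your proposal follows essentially the same route as the paper: partial-fraction expansion in the basis $\{1/(1-\bar\a_s t)\}_{s\le k}$ for the triangularity and non-degeneracy of the change of basis, the telescoping identity $\bar\cb_j\cb_k=\z_{j+1}\cdots\z_k$ for property (2), clearing denominators and Fej\'er--Riesz (the paper's \eqref{e5}) for the factorization property (3), and completeness of the Takenaka--Malmquist system under \eqref{e40} for the separation property (4). The degree bookkeeping you flag as the main obstacle works out exactly as you describe (the numerator is a non-negative trigonometric polynomial of degree at most $n$, so $S/\pi_n$ lands back in $\cl_n$), so there is no gap.
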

\begin{proof}
First, observe that 
\begin{equation}\label{e4}
w_{1k}=\cb_k=A_{0k}+ \sum^k_{j=1} \frac 1{\ovl{\cb_{k,\check j}(\a_j)}}\z_j,
\end{equation}
where $\cb_{k,\check j}=\prod^k_{s\not= j}\z_s$, and $A_{0k}$ is an easily computable constant. The proof is by inspection of residues of the LHS and the RHS at point $\{1/\bar\a_j\}$. We also have $\z_k=\frac 1{\bar\a_k}(\frac{\r_k}{1-\bar\a_k t}-1)$, where $\r_k=(1-|\a_k|^2)$ and $k\ge 1$. So, the first claim of the proposition is true for the passage from (2) to (1).

To go from (3) to (2), we write
$$
w_{3k}=B_{0k}+\sum^k_{j=1}\frac 1{\lt(\bar\a_j\, \prod^k_{s\not=j}(\bar \a_j-\bar \a_s)\rt)}\, \frac 1{1-\bar\a_j t},
$$ 
the proof being similar; $B_{0k}$ is a constant once again. The rest follows from the group properties of the square upper triangular matrices (with the non-degenerate diagonal).

This shows that it is enough to prove the second claim of the proposition for any of $\fw_i$, the rest will enjoy the same properties.  The key (and elementary) point is the following very well-known fact (see, for instance, \cite[Ch.~3, Sect.~2]{krnu}): a trigonometric polynomial is positive on $\bt$ iff  it can be represented as an analytic polynomial times its conjugate, i.e.,
\begin{equation}\label{e5}
0\le \sum^n_{j=0} a_jt^j +\ovl{a_j t^j}=\lt| \sum^n_{j=0} b_j t^j\rt|.
\end{equation}
We go through the (easy) proof of (2) of the current proposition for the system $\fw_1$. (1), p.~\pageref{p1} being clear, we notice that, for $j<k$
$$
\bar w_{1j} w_{1k}=\z_{j+1}\dots \z_{k},
$$
and relation (2), p.~\pageref{p1} follows from \eqref{e4}.  As for (3), p.~\pageref{p1}, let
$$
\sum^n_k (a_k\cb_k +\ovl{a_k\cb_k})\ge 0
$$
on $\bt$ for some $\{a_k\}$. We can rewrite the latter expression as $f(t)/\prod_k |1-\bar\a_k t|^2$, where $f\ge 0$ on $\bt$ is a positive trigonometric polynomial. Relation \eqref{e5} implies that $f=p\cdot\bar p$, where $p$ is an analytic polynomial, $\deg p\le n$. Hence,
$$
\sum^n_k (a_k\cb_k +\ovl{a_k\cb_k})=h\cdot \bar h,
$$
where $h=p/\prod_k (1-\bar\a_k t)$, which is definitely in $\cl_n$.

As for (4), p.~\pageref{p2}, we see that \eqref{e40} says that $lin \{\bar\cl_n +\cl_n : n\}$ is dense in $\ovl{\ca(\bt)}+\ca(\bt)$, which is, in turn, dense in $\cc(\bt)$. The separation property and the proposition follows.
\end{proof}

Suppose now that $\s\in\cm(\bt)$ and $c^i_k=c^i_{0k}=\int_\bt w_{ik} d\s$, or, more generally,
\begin{equation}\label{e51}
c^i_{jk}=\int_\bt \bar w_{ij} w_{ik} d\s,
\end{equation}
for $i=1,2,3$.
Of course $c^i_{jk}=\bar c^i_{kj}$. The corresponding generalized moment (Toeplitz) matrix looks like
$$
C^i_n=
\begin{bmatrix}
c^i_{00} & c^i_{01} & c^i_{02} & \dots & c^i_{0n}\\
c^i_{10} & c^i_{11} & c^i_{12} & \dots & c^i_{1n}\\
\vdots  & \vdots    & \ddots & \cdots & \vdots\\
 c^i_{n0} & \dots  & \dots & \dots & c^i_{nn}
 \end{bmatrix}\ge 0.
$$
As Proposition \ref{pr1} shows, the matrices $C^i_n$ are conjugated by non-degenerate upper triangular matrices.

From now on, we deal mostly with $C^1_n$ and $C^2_n$. The same development works for $C^3_n$ but is rather cumbersome, that is why we do not give it. Recall that $c^1_{kk}=c^1_{00}$.  Put $\cb_{jk}=\prod^k_{s=j+1}\z_s$ and notice that
\begin{equation}\label{e52}
c^1_{jk}=\int_\bt \bar w_{1j} w_{1k} d\s=\int_\bt \cb_{jk} d\s,
\end{equation}
and, with the help of  \eqref{e4},
$$
\cb_{jk}=A_{jk}+ \sum^k_{s=j+1} \frac 1{\ovl{\cb_{jk,\check s}(\a_s)}}\z_s,
$$
where $A_{jk}$ is a coefficient (which is easy to compute) and $\cb_{jk,\check s}$ stays for the product $\cb_{jk}$ with dropped $s$-th factor. So,
\begin{equation}\label{e6}
c^1_{jk}=A_{jk}+ \sum^k_{s=j+1} \frac 1{\ovl{\cb_{jk,\check s}(\a_s)}}c^1_s.
\end{equation}
Hence, if matrix $C^1_n$ is defined by relations \eqref{e51}, we have $C^1_n={C^1_n}^*\ge 0$  and it conforms to relations \eqref{e6}. It is natural to ask the inverse: is it true that a non-negative matrix $C^1_n$ satisfying relations \eqref{e6} can be represented as a Toeplitz matrix of a measure $\s\in\cm(\bt)$ (i.e., satisfies \eqref{e51})?

The answer is yes and it is given by a straightforward consequence of Theorem \ref{t2}.

\begin{proposition}\label{pr2} The following assertions are equivalent:
\begin{itemize}
\item[-] There is a $C^1_n={C^1_n}^*\ge 0$ satisfying relations \eqref{e6} for all $j, k$.
\item[-] There is a measure $\s\in\cm(\bt)$ generating $C^1_n$ through relations \eqref{e51} for all $j, k$.
\end{itemize}
\end{proposition}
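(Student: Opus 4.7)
The plan is to view this proposition as a direct specialization of Theorem \ref{t2} to the system $\fw_1 = \{\cb_k\}$, so most of the work is already done. The preparatory step is to identify the relations \eqref{e6} as the defining relations \eqref{e3} for a $\fw_1$-GT matrix. Since $|\cb_j|=1$ on $\bt$, we have $\bar w_{1j} w_{1k} = \cb_k/\cb_j = \cb_{jk} = \prod_{s=j+1}^{k}\z_s$ for $j<k$, which is a function in $\cl_{k-j}$. Applying to $\cb_{jk}$ the same partial-fraction expansion that yielded \eqref{e4}, one obtains
\[
\cb_{jk} = A_{jk} + \sum_{s=j+1}^{k} \frac{1}{\overline{\cb_{jk,\check s}(\a_s)}}\,\z_s,
\]
which upon rewriting each $\z_s$ back in terms of the basis $\{w_{1r}\}$ (via the lower-order inverse of \eqref{e4}) exhibits the coefficients $\b_{jk,s}$ of \eqref{e1}--\eqref{e3} explicitly. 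Thus \eqref{e6} is literally the GT-structure for $\fw_1$.

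Next, I would cite Proposition \ref{pr1}: the system $\fw_1$ satisfies properties (1)--(3) on page \pageref{p1} and the separation property (4) on page \pageref{p2}. These are exactly the hypotheses required to apply Theorem \ref{t2}. The implication "measure $\Rightarrow$ positive GT-matrix" is then standard: given $\s\in\cm(\bt)$ and defining $c^1_{jk}$ by \eqref{e51}, positivity $C^1_n\ge 0$ is immediate from the Gram-matrix identity $\sum_{j,k}\bar b_j b_k c^1_{jk} = \int_\bt |\sum_k b_k \cb_k|^2 d\s\ge 0$, and relations \eqref{e6} follow by integrating the partial-fraction identity above.

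For the converse, given $C^1_n={C^1_n}^*\ge 0$ satisfying \eqref{e6}, I would invoke the implication $(1)\Rightarrow(3')$ of Theorem \ref{t2}: the GT-structure plus positivity yields $\s\in\cm(\bt)$ such that $c^1_k = \int_\bt \cb_k\, d\s$ for every $k$. Then \eqref{e6} propagates this to the full matrix: for any $j\le k$,
\[
c^1_{jk} = A_{jk} + \sum_{s=j+1}^{k}\frac{1}{\overline{\cb_{jk,\check s}(\a_s)}}\,c^1_s = \int_\bt \cb_{jk}\,d\s = \int_\bt \bar w_{1j}w_{1k}\,d\s,
\]
which is precisely \eqref{e51} (and the case $j>k$ follows by Hermitian symmetry).

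There is really no serious obstacle, since everything substantive has been packed into Proposition \ref{pr1} and Theorem \ref{t2}. The only point that deserves care is the bookkeeping in step one, namely checking that the concrete expansion \eqref{e6} genuinely matches the abstract relations \eqref{e3}; but this is routine given \eqref{e4} and the upper-triangular change-of-basis between the $\z$-system and the $\cb$-system already noted in the proof of Proposition \ref{pr1}.
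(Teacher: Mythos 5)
Your proof is correct and follows the same route as the paper: the paper gives no separate argument for Proposition \ref{pr2}, presenting it as a direct consequence of Theorem \ref{t2} once Proposition \ref{pr1} has verified properties (1)--(4) for $\fw_1$ and the expansion \eqref{e4} has identified \eqref{e6} with the abstract GT-relations \eqref{e3}. Your write-up simply makes explicit the bookkeeping the paper leaves implicit.
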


With little changes, the same proposition holds for $\fw_2$, the only difference is that one has to use 
$$\ovl{\lt(\frac 1{1-\bar\a_j t}\rt)}\frac 1{1-\bar\a_k t}=\frac 1{1-\a_j \bar \a_k}\lt(\frac 1{1-\bar\a_k t} + 
\ovl{\frac 1{1-\bar\a_j t}} -1 \rt)
$$
instead of  \eqref{e52}.

For the sake of completeness, we give a criterion for the solvability of the moment problem for $\{c^i_k\}$. Up to inessential details, it is borrowed from \cite[Ch.~3, Sect.~2]{krnu}
\begin{proposition}\label{pr3} The moment problem (see (3'), Theorem \ref{t2}) is solvable iff:
\begin{enumerate}
\item For $\fw_1$:
$$
\lt[\frac 1{1-\a_k\bar\a_j}\big(\r_j\bar\a_kc^1_k+\r_k\a_j\bar c^1_j+c^1_0 (1-|\a_k|^2|\a_j|^2) \big)\rt]_{j,k}\ge 0
$$
\item For $\fw_2$:
$$
\lt[\frac{c^2_k+\bar c^2_j-c^2_0}{1-\a_k\bar\a_j}\rt]_{j,k}\ge 0
$$
\end{enumerate}
\end{proposition}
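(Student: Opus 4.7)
The plan is to invoke Theorem \ref{t2} for each of the systems $\fw_1$ and $\fw_2$. By Proposition \ref{pr1}, both systems satisfy assumptions (1)-(4) from Sections \ref{s1}-\ref{s2}, so Theorem \ref{t2} tells us that the moment problem for the sequence $\{c^i_k\}$ is solvable if and only if the full generalized Toeplitz matrix $C^i_n=[c^i_{jk}]_{j,k=0,\dots,n}$ is positive semidefinite for every $n$. Hence each item of Proposition \ref{pr3} reduces to rewriting $c^i_{jk}$ in closed form in terms of only the prescribed moments $c^i_s$ and the parameters $\a_s$, $\r_s=1-|\a_s|^2$, so that the resulting matrix matches the one displayed in the statement.

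For the system $\fw_2$, I would begin from the partial-fraction identity recorded just after Proposition \ref{pr2}, namely
\[
\overline{\left(\frac{1}{1-\bar\a_j t}\right)}\frac{1}{1-\bar\a_k t}
=\frac{1}{1-\a_j\bar\a_k}\left(\frac{1}{1-\bar\a_k t}+\overline{\frac{1}{1-\bar\a_j t}}-1\right),
\]
which is verified by clearing denominators on $\bt$ using $\bar t=1/t$. Integrating both sides against $d\s$ yields immediately $c^2_{jk}=(1-\a_j\bar\a_k)^{-1}(c^2_k+\bar c^2_j-c^2_0)$, matching (up to the Hermitian convention $c^2_{jk}=\overline{c^2_{kj}}$) the displayed matrix in item (2). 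A direct application of Theorem \ref{t2} concludes this case.

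For $\fw_1$, the same scheme applies, but one exploits in addition the upper-triangular non-degenerate change of basis from Proposition \ref{pr1}: by \eqref{e301}, $C^1_n\ge 0$ iff $C^2_n\ge 0$. Either one substitutes the explicit relations between the $c^1_k$'s and $c^2_k$'s derived from $\z_s=\frac{1}{\bar\a_s}\bigl(\frac{\r_s}{1-\bar\a_s t}-1\bigr)$ into the matrix from item~(2), or else one expands $\bar\cb_j\cb_k$ directly into a combination of $\cb_s$ and $\bar\cb_s$ by iterating \eqref{e4}, and integrates against $d\s$. Either route produces, after an elementary rearrangement using the Szeg\H o-type denominator $1-\a_k\bar\a_j$, the closed form in item (1). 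The one piece of work to carry out carefully is this final algebraic simplification, because the Blaschke-product structure of the $\cb_s$'s makes the residue bookkeeping somewhat tedious; however, nothing conceptually new is required beyond Theorem \ref{t2} and the partial-fraction identities already set up in the proof of Proposition \ref{pr1}.
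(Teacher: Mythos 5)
Your proposal is correct and follows essentially the same route as the paper: item (2) is obtained by integrating the partial-fraction identity displayed after Proposition \ref{pr2} against $d\s$ and invoking Theorem \ref{t2} (this is the argument the paper borrows from Krein--Nudelman), and item (1) is deduced from item (2) via the relation $c^2_k=\frac 1{\r_k}(\bar\a_k c^1_k+c^1_0)$ coming from $\z_k=\frac 1{\bar\a_k}\bigl(\frac{\r_k}{1-\bar\a_k t}-1\bigr)$, i.e.\ a congruence of the matrix in (2) by the positive diagonal $\mathrm{diag}(\r_j)$ together with the identity $\r_j+\r_k-\r_j\r_k=1-|\a_j|^2|\a_k|^2$ --- exactly your first suggested route. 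Your alternative route of expanding $\bar\cb_j\cb_k$ directly is not needed and would be messier, since the clean diagonal substitution already closes the argument.
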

The remark on the link between $\z_k$ and $1/(1-\bar\a_k t)$ next to \eqref{e4} implies that $c^2_k=\frac 1{\r_k}(\bar\a_k c^1_k+c^1_0)$, so the first claim follow from the second one.

\subsection{ORFs, Blaschke-VG-processes and prediction problems}\label{s32}
Once again, the purpose of this subsection is to specify the content of Sections \ref{s21}, \ref{s22} to the special case of  system $\fw_1$. The construction for $\fw_i,\ i=2, 3$, is completely analogous and we leave it to the interested reader.

So, let $\{\a_k\}_k$ be a sequence of points in $\bd$ satisfying the assumptions of the previous subsection. 
Let $\bx$ be a varying Gaussian process (see the definition in Section \ref{s21})
such that  $C=C_{n,\dots, n+m}=C(X_{n},\dots X_{n+m})$ is a generalized Toeplitz matrix w.r.to $\{w^1_n,\dots, w^1_{n+m}\}$ (a Blaschke-VG-process, to be brief). Proposition \ref{pr2} implies that there exists the unique $\s\in\cm(\bt)$ such that
$$
C=C_{n,\dots, n+m}=\lt[\int_\bt \bar w_{1j} w_{1k} d\s\rt]_{j,k}.
$$
Notice that $C=C^*\ge 0$ and it satisfies relations \eqref{e6}. This means that we can calculate $C_{n,\dots, n+m+1}$ having $C_{n,\dots, n+m}$ and the element $C_{n+m, n+m+1}$. Indeed, just apply \eqref{e6} to recover the last column and the last row of the "new" matrix $C_{n,\dots, n+m+1}$. Hence, to get, say,
$C_{n+l,\dots, n+m+l}$ from  $C_{n,\dots, n+m}$, do the above step-by-step procedure  to come to $C_{n,\dots, n+m+l}$ and then throw away the first $l$ columns and rows.
A counterpart to Theorem \ref{t3} in this situation is as follows.
\begin{proposition}\label{pr4}
Let $\bx$ be a Blaschke-VGP and $\s$ be its spectral measure. Then there exists a unique family of random variables $Z_\xi=Z( . , \xi), \xi\in\bt$ with the properties:
\begin{enumerate}
\item For $\{\xi_1,\dots,\xi_k\}, \xi_j\in\bt, \xi_i\not =\xi_j$, the random variables $\{Z_{\xi_j}\}_j$ are jointly normally distributed.
\item For $I=[\xi_1,\xi_2)\subset \bt$, one writes $Z(I)=Z_{\xi_2}-Z_{\xi_1}$ and
$$
E(|Z(I)|^2)=\s(I),\quad E(Z(I_1)\bar Z(I_2))=0, 
$$
for $I_1\cap I_2=\emptyset$.
\item Finally,
$$
X_n=\int_\bt \cb_n (\xi) dZ( . , \xi).
$$
\end{enumerate}
\end{proposition}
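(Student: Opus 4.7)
The plan is to derive Proposition \ref{pr4} as a direct specialization of Theorem \ref{t3} to the system $\fw_1=\{w_{1k}\}=\{\cb_k\}$ of finite Blaschke products. The bulk of the work is organisational: once $\fw_1$ is checked to be admissible in the sense of Sections \ref{s1}--\ref{s2}, both the existence and uniqueness of the family $\{Z_\xi\}$ with properties (1)--(3) come for free from the abstract construction already performed.

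First, I would note that Proposition \ref{pr1} establishes that $\fw_1$ satisfies conditions (1)--(3) on page \pageref{p1} together with the separation property (4) on page \pageref{p2}, with the expansion coefficients $\{\b_{jk,s}\}$ of \eqref{e1} given explicitly by the partial-fraction identity \eqref{e4} (leading to \eqref{e6}). Since $|\cb_k(t)|=1$ for $t\in\bt$, one has
$$
\sup_n\int_\bt|\cb_n|^2\,d\s=\s(\bt)<\infty,
$$
so the finite-second-moment hypothesis \eqref{e33} is automatic in the Blaschke setting, irrespective of how the parameters $\{\a_k\}$ are distributed in $\bd$. Next, Proposition \ref{pr2} (itself a consequence of Theorem \ref{t2}) supplies the unique spectral measure $\s\in\cm(\bt)$ attached to $\bx$ and satisfying $c^1_{jk}=\int_\bt \bar\cb_j\cb_k\,d\s$.

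With these ingredients in place, I would invoke Theorem \ref{t3} verbatim, replacing every occurrence of $w_n$ by $\cb_n$. Concretely, one defines $U:L^2(\bx)\to\lds$ by $UX_n=\cb_n$ on generators; the isometry
$$
E(X_k\bar X_j)=c^1_{jk}=\int_\bt\bar\cb_j\cb_k\,d\s=(\cb_k,\cb_j)_{\lds}
$$
together with the separation property makes $U$ a unitary operator. Setting $Z_\xi:=U^{-1}(\chi_{[1,\xi)})$, properties (1) (joint normality, transferred under $L^2$-limits of Gaussian vectors), (2) (orthogonality of increments and the identity $E|Z(I)|^2=\s(I)$), and (3) (the stochastic-integral representation $X_n=\int_\bt\cb_n(\xi)\,dZ(\cdot,\xi)$, obtained by approximating continuous functions on $\bt$ by step functions on a partition $\{I_k\}$) all go through mechanically, as in the proof of Theorem \ref{t3}.

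There is no genuine obstacle here: the substantive inputs, namely admissibility of $\fw_1$ and solvability of the associated moment problem, have already been handled in Propositions \ref{pr1} and \ref{pr2}, while $|\cb_k|\equiv 1$ trivializes \eqref{e33}. In that sense Proposition \ref{pr4} is best read as the concrete illustration ``Theorem \ref{t3} applied to $\fw_1$''; stating it separately serves to record the explicit form $X_n=\int_\bt\cb_n\,dZ$ of the spectral decomposition in the ORF case, which is the version used in Section \ref{s32} for the prediction problems.
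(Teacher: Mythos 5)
Your proposal is correct and matches the paper's treatment: Proposition \ref{pr4} is indeed presented there simply as Theorem \ref{t3} specialized to the system $\fw_1$, with admissibility supplied by Proposition \ref{pr1}, the spectral measure by Proposition \ref{pr2}, and the moment bound \eqref{e33} holding automatically because $|\cb_n|\equiv 1$ on $\bt$. No further comment is needed.
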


Now,  the generalized orthogonal polynomials $\{\p_k,\p^*_k, \pp_k,\pp^*_k\}$ appearing in Section \ref{s22} are {\it precisely} orthogonal rational functions (denoted by the same letters) studied in \cite{bu1, bu2, bar1}. So, the forward and backward prediction problems from Section \ref{s22} for Blashke-VG-processes are solved {\it precisely} by the ORFs. The asymptotics of  the ORFs $\{\p_n,\p^*_n\}$ describe the behavior of the predictors for $n\to\infty$.  The study of these asymptotics is an interesting (and challenging) analytical problem;  this is the main purpose of the paper Baratchart et al. \cite{bar1}, where, in particular, a special attention is paid to the measures $\s$ from the so-called Szeg\H o class.

In the rest of this subsection we extensively use the terminology and notation of \cite{bar1}; see especially \cite[Sect. 5]{bar1}.
To give a sample of application of results of the work, let us turn back to relations \eqref{e35}.  Recall that $\hat E_n=X_n-\hat X_n$ is precisely one-step ahead prediction error and its "energy" is 
$$E_n=E(|\hat E_n|^2)^{1/2}=E(|X_n-\hat X_n|^2)^{1/2}=\frac 1{|\p^*_n(\a_n)|}.$$  
\begin{proposition}\label{pr5} Suppose that assumptions of \cite{bar1}, Theorem 5.8, hold true. Let $\mathbf{X}$ be a $\fw_1$-VG-process and $\s$ be its spectral measure (lying in the Szeg\H o class). Let $\{\a_{m_n}\}$ be a subsequence of $\{\a_n\}$ and $\lim_{n\to\infty} \alpha_{m_n}=\a\in \bar\bd$.
\begin{itemize}
\item For $\a\in \bd$, \quad
$\lim_{n\to\infty} E_{m_n}=(1-|\a|^2)^{1/2}|S(\a)|.$
\item For $\a\in\bt$, \quad
$\lim_{n\to\infty} E_{m_n}=0.$
\end{itemize}
\end{proposition}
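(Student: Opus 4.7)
The plan is to read off the proposition from Theorem~5.8 of \cite{bar1} via the identity $E_n = 1/|\p_n^*(\a_n)|$ stated just before the proposition. The argument is essentially a dictionary translation, so I would keep it short.

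First, I would justify the identity $E_n = 1/|\p_n^*(\a_n)|$ in the present probabilistic framework. By \eqref{e35}, the one-step forward innovation satisfies $\hat E_n = \pp_n(\cz)X_0$, and the unitary map $U:L^2(\bx) \to L^2(d\s)$ from Theorem \ref{t3} sends $\pp_n(\cz)X_0$ to $\pp_n$. Hence $E_n = E(|\hat E_n|^2)^{1/2} = \|\pp_n\|_{L^2(d\s)}$, and the standard ORF normalization $\|\pp_n\|_{L^2(d\s)} = 1/|\p_n^*(\a_n)|$ (see \cite[Ch.~3]{bu1}) closes this step. Thus the proposition is equivalent to computing $\lim_{n\to\infty}|\p_{m_n}^*(\a_{m_n})|$.

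Second, I would invoke Theorem~5.8 of \cite{bar1}, which under the assumed Szeg\H o-class hypothesis on $\s$ yields the asymptotic
\[
|\p_n^*(\a_n)| \sim \frac{1}{(1-|\a_n|^2)^{1/2}\,|S(\a_n)|}
\]
with convergence uniform enough to allow evaluation along a subsequence $\{\a_{m_n}\}$. For $\a\in\bd$, the Szeg\H o function $S$ is continuous (in fact analytic and nonzero) at $\a$, so passing to the limit yields $E_{m_n}\to(1-|\a|^2)^{1/2}|S(\a)|$. For the boundary case $\a\in\bt$, the explicit factor $(1-|\a_{m_n}|^2)^{1/2}\to 0$ will force $E_{m_n}\to 0$, provided the factor $|S(\a_{m_n})|$ does not blow up too fast; this is ensured by the precise statement of Theorem~5.8 (or by bounding $|\p_n^*(\a_n)|$ from below via the lower Szeg\H o-type estimate).

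The main obstacle is thus bookkeeping rather than substance: one has to verify that the normalization of $\p_n^*$ used in \cite{bar1} matches the one implicit in the identity $E_n = 1/|\p_n^*(\a_n)|$, and to confirm that Theorem~5.8 furnishes convergence that is robust under the substitution $z=\a_{m_n}$ as $\a_{m_n}$ approaches $\a$, both in the interior and on the boundary. Granted these compatibility checks, there is no further analysis to do; the proposition is a direct probabilistic restatement of the Szeg\H o-type asymptotic for ORFs evaluated at the interpolation nodes.
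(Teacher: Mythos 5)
Your proposal is correct and follows exactly the route the paper intends: the paper gives no separate proof of Proposition \ref{pr5}, merely recording the identity $E_n=1/|\p^*_n(\a_n)|$ just before the statement and treating the two limits as a direct readout of \cite{bar1}, Theorem 5.8. Your additional justification of that identity via the unitary $U$ and the normalization $\|\pp_n\|_{L^2(d\s)}=1/|\p^*_n(\a_n)|$ is a harmless (and welcome) elaboration of what the paper leaves implicit.
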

The proposition says, in particular, that in the second case the corresponding subprocess is asymptotically deterministic.


\section{ARMA-processes and their varying Gaussian counterparts}\label{s4}

\subsection{Reminder on "classical" ARMA-processes}\label{ss41}
ARMA-$(p,q)$ (i.e., {\it autoregressive moving average}) processes form a subclass of stationary Gaussian processes. They play an important role in applications and are widely studied. This subsection recalls some basic definitions and results on the topic. Its content is borrowed from Brockwell-Davis \cite[Ch. 3, 4]{bro}, which extensively discusses  different issues pertaining to the subject.

Let $\mathbf{Z}=\{Z_n\}_n$ be a SGP with covariance matrices $\mathbf{C}=\{C_n\}_n$ of the form  $C_n=\mathrm{diag}\, \{\d^2\}$ and zero mean. The process $\mathbf{Z}$ is called a white noise (WN$(\d^2, 0)$, to be brief).

Let $\mathbf{X}$ be a SGP with covariance matrices given by
\begin{equation}\label{e7}
C_n=[c_{k-j}]_{j,k=0,\dots,n}=
\begin{bmatrix}
c_0 & c_1& \dots &c_n\\
\bar c_1 &  \ddots & \ddots & \vdots\\
\vdots & \ddots & c_0 & c_1\\
\bar c_n & \dots &\bar c_1 & c_0
\end{bmatrix}
\end{equation}
where, as always,  $c_{-s}=\bar c_s$.

Let $\cz'$ be a shift operator acting as $\cz' X_n=X_{n-1}$. For polynomials $\p, \t, \deg \p=p, \deg \t=q$, $\mathbf{X}$ is called an ARMA-$(p,q)$ process \cite[Ch. 3, Sect. 1]{bro}, if it satisfies the equation
$$
\p(\cz')X_n=\t(\cz') Z_n,
$$
where $\mathbf{Z}$ is a WN$(\d^2, 0)$. Notice that if the operator $\p(\cz')$ is invertible (in some sense), we can rewrite the above equation formally as $X_n=\p(\cz')^{-1}\t(\cz') Z_n$.

Assuming now that $\psi=\{\psi_j\}\in l^1$, we see that 
$$
Y_n=\psi(\cz') X_n=\sum^\infty_{j=-\infty} \psi_j X_{n-j}
$$
converges in $L^2(dP)$, \cite{bro}, Proposition 3.1.1, and hence is well-defined. Setting $\mathbf{Y}=\{Y_n\}$, we compute readily its covariance matrices \cite{bro}, Proposition 3.1.2,
\begin{equation}\label{e72}
\mathbf{C_Y}=\{\ti C_n\}_n, \  \ti C_n=[\ti c_{k-j}]_{j,k=0, \dots, n}, \quad  \ti c_h=\sum^\infty_{j,k=-\infty}\psi_j\bar \psi_k c_{h-j+k}.
\end{equation}
We see, in particular, that $\mathbf{Y}$ is a GSP. We say that $\mathbf{X}$ is casual, if $X_n=\sum^\infty_{j=0}\psi_j Z_{n-j}$.

\begin{theorem}[{\cite{bro}, Theorem 3.1.1}] Let $\mathbf{X}$ be an ARMA-$(p,q)$ process, $\p(\cz')X_n=\t(\cz')Z_n$ and polynomials $\p, \t$ do not have common zeroes. Then $\mathbf{X}$ is casual {\it iff} the zeroes of $\p$ lie in $\{|z|>1\}$. In this case,
\begin{equation}\label{e71}
X_n=\psi(\cz') Z_n=\sum^\infty_{j=-\infty} \psi_j Z_{n-j},
\end{equation}
where 
$$
\psi(z)=\frac{\t(z)}{\p(z)}=\sum^{\infty}_{j=-\infty} \psi_j z^j
$$
for $|z|\le 1$.
\end{theorem}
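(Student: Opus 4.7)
The plan is to reduce the problem to a statement about formal power series by passing through the spectral/frequency domain, then exploit the rationality of the transfer function $\psi=\t/\p$.

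First, for the sufficient direction, assume the zeros of $\p$ all lie in $\{|z|>1\}$. Since $\p$ and $\t$ are coprime polynomials with $\p$ nonzero on the closed unit disk, the rational function $\psi(z)=\t(z)/\p(z)$ is holomorphic on some open disk $\{|z|<R\}$ with $R>1$, so its Taylor coefficients $\{\psi_j\}_{j\ge 0}$ decay geometrically, and in particular $\psi\in l^1$ with $\psi_j=0$ for $j<0$. Define $X_n=\sum_{j\ge 0}\psi_j Z_{n-j}$; the series converges in $L^2(dP)$ (as quoted from \cite{bro}, Proposition 3.1.1 in the paper), giving a stationary Gaussian process. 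I would then verify the ARMA identity $\p(\cz')X_n=\t(\cz')Z_n$ by applying the finite-order differential operator $\p(\cz')$ term-by-term, using the $L^2$-continuity of shifts, and invoking the polynomial identity $\p(z)\psi(z)=\t(z)$. Causality is automatic from $\psi_j=0$ for $j<0$.

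For the necessary direction, assume $\bx$ is causal, so $X_n=\sum_{j\ge 0}\psi_j Z_{n-j}$ with $\{\psi_j\}\in l^1$ (or at worst $l^2$, which suffices for what follows). Apply $\p(\cz')$ to both sides; by the same $L^2$-continuity argument one gets $\p(\cz')X_n=\sum_{j\ge 0}(\p*\psi)_j\,Z_{n-j}$ where $\p*\psi$ denotes coefficientwise convolution. Since this must equal $\t(\cz')Z_n$, and since $\{Z_n\}$ is a white noise (so its translates are orthonormal up to $\d^2$), identification of coefficients yields the formal power-series identity $\p(z)\psi(z)=\t(z)$ on $|z|\le 1$. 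Now $\psi$ is holomorphic on $\{|z|<1\}$ and continuous up to the boundary. If $\p$ had a zero $z_0$ with $|z_0|\le 1$, then $\t(z_0)=\p(z_0)\psi(z_0)=0$, contradicting the hypothesis that $\p$ and $\t$ have no common zero. Hence every zero of $\p$ lies in $\{|z|>1\}$, and the representation \eqref{e71} is exactly what we constructed in the sufficient direction, so $\psi=\t/\p$.

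The main obstacle, and the step one must be most careful about, is justifying the term-by-term manipulations: namely, that one may apply the polynomial operator $\p(\cz')$ to an $L^2$-convergent infinite sum of shifts of a white noise and obtain a convergent sum with coefficients equal to the convolution $\p*\psi$. This reduces to the fact (essentially \eqref{e72} specialized to a white noise input) that $\cz'$ is an isometry on $L^2(\bx)$, so any bounded linear combination of shifts is continuous on $L^2$; once this is in place, the whole equivalence becomes a clean translation between $L^2$-identities of stochastic processes and analytic identities for their transfer functions.
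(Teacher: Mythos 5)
The paper does not prove this statement at all; it is quoted verbatim from Brockwell--Davis (their Theorem 3.1.1), so your proposal can only be measured against the standard argument there. Your necessity direction is essentially that standard argument and is sound: apply $\p(\cz')$ to the causal representation, identify coefficients against $\t(\cz')Z_n$ using the orthogonality of the white noise, deduce $\p(z)\psi(z)=\t(z)$ on the closed disk, and use coprimality of $\p,\t$ to exclude zeros of $\p$ there. One caveat: your parenthetical claim that $\ell^2$ summability of $\{\psi_j\}$ ``suffices'' clashes with your subsequent appeal to continuity of $\psi$ up to the boundary, which is exactly what you need to rule out a zero of $\p$ lying on $\bt$ itself; stick with $\ell^1$, which is how causality is set up in the paper anyway.

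The sufficiency direction as written has a genuine gap. You \emph{construct} a process $\tilde X_n=\sum_{j\ge0}\psi_jZ_{n-j}$ and verify that it satisfies the ARMA equation; but the theorem asserts that the \emph{given} process $\mathbf{X}$ is causal, and exhibiting some causal stationary solution of the same equation does not show that $\mathbf{X}$ coincides with it. To close this you would need a uniqueness statement for stationary solutions (equivalently, that the only stationary solution of the homogeneous equation $\p(\cz')Y_n=0$ is $Y\equiv0$ when $\p$ does not vanish on $\bt$), which you neither state nor prove. The standard route avoids the issue entirely: expand $1/\p(z)=\sum_{j\ge0}\xi_j z^j$ on $\{|z|<1+\ep\}$, note that the $\xi_j$ decay geometrically and hence are absolutely summable, and apply the filter $\xi(\cz')$ directly to both sides of the given identity $\p(\cz')X_n=\t(\cz')Z_n$; the composition rule for absolutely summable filters on stationary processes (the fact underlying \eqref{e72}) then yields $X_n=\xi(\cz')\t(\cz')Z_n=\psi(\cz')Z_n$ for the given $\mathbf{X}$ itself. (Minor slip: $\p(\cz')$ is a finite-order \emph{difference} operator, not a differential one.)
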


Furthermore, the spectral measure for the above process $\mathbf{X}$ can be easily  written down, \cite{bro}, Theorem 4.4.2,
\begin{equation}\label{e74}
d\s_{\mathbf{X}}(t)= \d^2 \frac{|\t(\bar t)|^2}{|\p(\bar t)|^2}\, dm(t),
\end{equation}
where $t=e^{i\phi}\in \bt$ and $dm(t)=\frac{dt}{2\pi i t}=\frac 1{2\pi} d\phi$ is the normalized Lebesgue measure on the unit circle. Moreover, if $Z_n=\int_\bt \xi^n dZ_{\mathbf{Z}}(.,\xi)$, we obtain the spectral decomposition for $\mathbf{X}$
\begin{equation}\label{e75}
 X_n=\int_\bt \xi^n \  \d\frac{\t(\bar \xi)}{\p(\bar \xi)}\, dZ_{\mathbf{Z}}(.,\xi),
\end{equation}
see Theorem \ref{t03}.

\subsection{Varying analogues of ARMA-type processes}\label{ss42}
Here, we rewrite the results of the previous subsection for varying Gaussian processes as introduced in Section \ref{s2}. We think especially of the VG-processes satisfying formula \eqref{e71} w.r.to a white noise $\mathbf{Z}$. Of course, there are no mathematical reasons to assume that the coefficients $\{\psi_j\}$ come from a rational function, but since the situation is important in practice, we keep it in mind and do some  comments on the issue. The corresponding VG-processes will be called varying ARMA-type processes ($\fw$-VARMA- or VARMA-processes, to be brief).

To start with,  let  $\fw$ be a general system satisfying usual properties (see the beginning of Section \ref{s1}, p.~\pageref{p1}). For a VG-process $\mathbf{X}$ we always assume \eqref{e330} and that the operator $\cz'=\cz^{-1}$, $\cz$ being defined right after the formulation of Theorem \ref{t3}, is bounded, i.e. $||\cz'||\le R$ for some $R\ge 1$. Sometimes we require that $||\cz'^{-1}||\le R$, too. These assumptions on $\cz'$ ($\cz'^{-1}$) can be easily verified, for example, for some systems of ORFs (i.e., $\{\a_k\}$ compactly contained in $\bd$ and $c_1 m\le \s\le c_2 m$ on $\bt$ with  $c_1,c_2>0$).

For instance, we may say that $\mathbf{X}$ is a varying white noise process (VWN), if $d\s=dm$, $\s$ being its spectral measure, see Theorem \ref{t2} and the discussion at the beginning of Section \ref{s2}.  

 Let $\p,\t$ be polynomials of degree $p$ and $q$, respectively. We say that $\mathbf{Y}$ is VARMA-process w.r.to  a $\fw$-VG-process $\mathbf{X}$, if $\p(\cz') Y_n=\t(\cz') X_n$.
More generally, a process $\mathbf{Y}$ is $\psi$-VARMA-process w.r.to $\mathbf{X}$, if
\begin{equation}\label{e73}
 Y_n=\sum^\infty_{j=-\infty} \psi_j X_{n-j},
\end{equation}
where $\psi=\{\psi_j\}$ is so that
\begin{equation}\label{e76}
\sum^\infty_{j=-\infty}|\psi_j|R^{|j|}<\infty.
\end{equation}
Notice that $\mathbf{Y}$ is not a $\fw$-VG-process in general, and the results of Section \ref{s2} do not apply. Nevertheless, we are able to obtain conclusions similar to Theorems \ref{t2}, \ref{t3} for these processes with the help of filtering tricks \eqref{e302}-\eqref{e304} starting from the "reference" VG-processes $\mathbf{X}$. We also note that the introduced classes of VARMA- ($\psi$-VARMA-) processes are exactly {\it properly filtered} VG-processes. The filters are of course linear and stationary. 

It is plain that conditions \eqref{e330} and \eqref{e76} imply that the sum \eqref{e73} converges in $L^2(dP)$ and $\mathbf{Y}$ is well-defined.  As before, we say that $\mathbf{Y}$ is casual w.r.to $\mathbf{X}$, if $Y_n=\sum^\infty_{j=0} \psi_j X_{n-j}$.

The covariance matrices $\mathbf{C_Y}=\{C_{n, \mathbf{Y}}\}$ are easy to compute. Indeed, one has
$$
\begin{bmatrix}
\dots & Y_{-1}& Y_0 &Y_1 &\dots
\end{bmatrix}
=
\begin{bmatrix}
\dots&X_{-1}&X_0&X_1&\dots
\end{bmatrix} \
\begin{bmatrix}
\ddots &   &\ddots &   & \ddots \\
\ddots & \psi_0 & \psi_{1}& \psi_{2} & \ddots\\
\ddots & \psi_{-1} & \psi_0 & \psi_{1} & \ddots\\
\ddots  & \psi_{-2} & \psi_{-1} & \psi_{0} & \ddots\\
\ddots &  &\ddots &  & \ddots 
\end{bmatrix} \
$$
and, with $\Psi$ denoting the above matrix, 
$$
C_{n, \mathbf{Y}}=\Psi^* C_{n, \mathbf{X}} \Psi, 
$$ 
compare to \eqref{e72}.

\begin{proposition}\label{pr7} Let $\mathbf{Y}$ be a VARMA-process w.r.to a $\fw$-VG-process $\mathbf{X}$, and $||\cz'||\le R$. Suppose that polynomials $\p$, $\t$ do not have common zeroes. Put
$$
\psi(z)=\frac{\t(z)}{\p(z)}=\sum^\infty_{j=-\infty} \psi_j z^j.
$$
If the zeroes of $\p$ are in $\{|z|>R\}$, $\mathbf{Y}$ is casual w.r.to $\mathbf{X}$ and $Y_n=
\sum^\infty_{j=0} \psi_j X_{n-j}$.
\end{proposition}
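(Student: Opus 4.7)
The plan is to mirror the classical ARMA argument, the new ingredient being the norm bound $\|\cz'\|\le R$ in place of the unitarity of the shift in the stationary setting.

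First, I would establish the analyticity of $\psi$ on a disk strictly larger than $\{|z|\le R\}$. Since the zeros of $\p$ all lie in $\{|z|>R\}$, by compactness there exists $R_1>R$ such that $\p$ has no zeros on $\{|z|\le R_1\}$. Hence $\psi(z)=\t(z)/\p(z)$ is holomorphic on $\{|z|\le R_1\}$, its Laurent expansion around $0$ reduces to a Taylor series $\psi(z)=\sum_{j=0}^\infty \psi_j z^j$ (so $\psi_j=0$ for $j<0$), and the coefficients decay geometrically: $|\psi_j|\le Cr^j$ for some $r<1/R$. In particular $\sum_j |\psi_j|R^{|j|}<\infty$, verifying the summability hypothesis \eqref{e76}.

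Next, I would define the candidate causal process $\tilde Y_n:=\sum_{j=0}^\infty \psi_j X_{n-j}$. By \eqref{e330} the family $\{X_{n-j}\}$ is $L^2(dP)$-bounded, and absolute summability of $\{\psi_j\}$ gives convergence of the series in $L^2(dP)$. At the operator level the series $\psi(\cz'):=\sum_{j=0}^\infty \psi_j (\cz')^j$ converges in operator norm, since $\|(\cz')^j\|\le R^j$ and $\sum|\psi_j|R^j<\infty$; one then has the compact formula $\tilde Y_n=\psi(\cz')X_n$.

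The core step is the identification $\tilde Y_n=Y_n$, achieved by showing that $\p(\cz')$ is invertible and that its inverse is the operator $1/\p$ applied via the functional calculus. Writing $\p(z)=c\prod_{i=1}^{p}(z-z_i)$ with $|z_i|>R\ge\|\cz'\|$, each factor $\cz'-z_iI=-z_i(I-\cz'/z_i)$ is invertible by a Neumann series (since $\|\cz'/z_i\|<1$), so $\p(\cz')$ is invertible. Moreover, comparing coefficients gives $\p(\cz')^{-1}=\sum_{j\ge 0}a_j(\cz')^j$, where $1/\p(z)=\sum_{j\ge 0} a_j z^j$ is the Taylor expansion on $\{|z|\le R_1\}$; from the scalar identity $\p(z)\psi(z)=\t(z)$ it then follows that $\p(\cz')^{-1}\t(\cz')=\psi(\cz')$ as bounded operators. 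Applying $\p(\cz')$ to $\tilde Y_n=\psi(\cz')X_n$ yields $\p(\cz')\tilde Y_n=\t(\cz')X_n=\p(\cz')Y_n$, and invertibility of $\p(\cz')$ forces $Y_n=\tilde Y_n=\sum_{j=0}^\infty \psi_j X_{n-j}$, proving both causality and the explicit convolution representation.

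The main obstacle I anticipate is keeping the operator functional calculus unambiguous on a common space carrying both $\mathbf{X}$ and $\mathbf{Y}$: one needs the formal scalar identity $\p(z)\psi(z)=\t(z)$ to transfer cleanly to an equality of bounded operators applied to $X_n$, and the Neumann inversion of $\p(\cz')$ must produce precisely the Taylor series of $1/\p$. Both points reduce to absolute convergence in operator norm (guaranteed by the gap $R_1>R\ge\|\cz'\|$), but some care is needed about the domain on which $\cz'$, and hence $Y_n$, is a priori defined.
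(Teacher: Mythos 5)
Your proof is correct and is exactly the argument the paper intends: Proposition \ref{pr7} is stated in the paper without proof, as the direct analogue of Theorem 3.1.1 of Brockwell--Davis \cite{bro}, and your adaptation --- replacing unitarity of the lag operator by the bound $\|\cz'\|\le R$, expanding $\psi=\t/\p$ on a disk of radius $R_1>R$ so that $\psi_j=0$ for $j<0$ and $\sum_j|\psi_j|R^j<\infty$, and inverting $\p(\cz')$ factor by factor via Neumann series --- is the standard route. The one point worth making explicit is that the defining relation $\p(\cz')Y_n=\t(\cz')X_n$ must be read as an identity of vectors in $L^2(\mathbf{X})$, the space on which $\cz'$ acts as a bounded operator, so that $\p(\cz')^{-1}$ can legitimately be applied to $Y_n$; this is the interpretation consistent with the paper's definition of $\cz'$ and is precisely the caveat you flag at the end.
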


Turning to the spectral part of the matter, we see that spectral characteristics of $\mathbf{Y}$ defined by \eqref{e73}, are readily expressible in terms of spectral parameters of $\mathbf{X}$. In fact, one has
$$
E(Y_k\bar Y_j) =\int_\bt \lt(\sum^\infty_{s=-\infty} \psi_s w_{k-s}\rt) \lt(\sum^\infty_{p=-\infty} \psi_p w_{j-p}\rt)\, d\s,
$$
where $E(X_k\bar X_j)=\int_\bt w_k\bar w_j\, d\s$ and $\s$ is a spectral measure of $\mathbf{X}$. Furthermore,
$$
Y_n=\int_\bt \lt(\sum^\infty_{j=-\infty} \psi_j w_{n-j}\rt) (\xi) dZ(., \xi),
$$
where $X_n=\int_\bt w_n(\xi) dZ(.,\xi)$ is the spectral representation for  $\mathbf{X}$. It is instructive to compare the last displayed formulas to \eqref{e74} and \eqref{e75}.

\end{document}